\newtheorem{thm}{Theorem} [section]
\newtheorem{cor}[thm]{Corollary}
\newtheorem{lem}[thm]{Lemma}
\theoremstyle{definition}
\theoremstyle{remark}
\numberwithin{equation}{section}
\begin{document}
\title{The 2-adic valuations of Stirling numbers of the first kind }
\begin{abstract}
Let $n$ and $k$ be positive integers. We denote by $v_2(n)$ the
2-adic valuation of $n$. The Stirling numbers of the first kind,
denoted by $s(n,k)$, counts the number of permutations of $n$ elements
with $k$ disjoint cycles. In recent years, Lengyel, Komatsu and Young,
Leonetti and Sanna, and Adelberg made some progress on the
$p$-adic valuations of $s(n,k)$. In this paper, by introducing the
concept of $m$-th Stirling numbers of the first kind and providing
a detailed 2-adic analysis, we show an explicit formula on the
2-adic valuation of $s(2^n, k)$. We also prove that
$v_2(s(2^n+1,k+1))=v_2(s(2^n,k))$ holds for all
integers $k$ between 1 and $2^n$. As a corollary, we show
that $v_2(s(2^n,2^n-k))=2n-2-v_2(k-1)$ if $k$ is odd and
$2\le k\le 2^{n-1}+1$. This confirms partially a conjecture
of Lengyel raised in 2015. Furthermore, we show that if
$k\le 2^n$, then $v_2(s(2^n,k)) \le v_2(s(2^n,1))$ and
$v_2(H(2^n,k))\leq -n$, where $H(n,k)$ stands for the
$k$-th elementary symmetric functions of $1,1/2,...,1/n$.
The latter one supports the conjecture of
Leonetti and Sanna suggested in 2017.
\end{abstract}

\author[M. Qiu]{Min Qiu}
\address{Mathematical College, Sichuan University, Chengdu 610064, P.R. China}
\email{qiumin126@126.com}
\author[S.F. Hong]{Shaofang Hong$^*$}
%    Address of record for the research reported here
\address{Mathematical College, Sichuan University, Chengdu 610064, P.R. China}
%\curraddr{}
\email{sfhong@scu.edu.cn; s-f.hong@tom.com; hongsf02@yahoo.com}
\thanks{$^*$S.F. Hong is the corresponding author and was supported
partially by National Science Foundation of China Grant \#11771304
and by the Fundamental Research Funds for the Central Universities.}
\keywords{2-adic valuation, 2-adic analysis, Stirling number of the
first kind, the $m$-th Stirling number of the first kind,
elementary symmetric function.}
\subjclass[2000]{Primary 11B73, 11A07}
\maketitle

\section{Introduction}
Let $n$ and $k$ be nonnegative integers. Define the {\it Pochhammer
symbol} $(x)_n$ by $(x)_n:=x(x+1)(x+2)\cdots(x+n-1)$ if $n\ge 1$ and
$(x)_n:=1$ if $n=0$. The \emph{Stirling numbers of the first kind},
denoted by $s(n,k)$, counts the number of permutations of $n$
elements with $k$ disjoint cycles. One can characterize $s(n,k)$
by
$$(x)_n=\sum_{k=0}^{n}s(n,k)x^k. $$
The \emph{Stirling numbers of the second kind}
$S(n,k)$ is defined as the number of ways to partition a set of
$n$ elements into exactly $k$ nonempty subsets, and we have
$$ S(n,k)=\frac{1}{k!}\sum_{i=0}^{k}(-1)^i\binom{k}{i}(k-i)^n.$$

Divisibility properties of integer sequences have long been objects of
interest in number theory. Many authors studied the divisibility
properties of Stirling numbers of the second kind, see, for example,
\cite{[Ad],[Am],[CF],[Da],[HZ1],[TL],[TL2],[TL3],[Lu],[Wan],[HZ2],[HZ3]}.
But there are few of results on the divisibility of the 
Stirling numbers of the first kind in the literature. Actually, 
unlike $S(n,k)$, there is no easy way to use an explicit formula
for the Stirling numbers of the first kind. This makes it more
difficult to investigate the divisibility properties of $s(n,k)$.

On the other hand, for any positive integer $n$ and $k$ with 
$n \geq k$, the Stirling numbers of the first kind is closely 
related to $H(n,k)$ by the following identity
\begin{align}\label{1.1}
s(n+1,k+1)=n!H(n,k)
\end{align}
(see Lemma 1.1 in \cite{[LS]}),
where $H(n,k)$ represents the {\it $k$-th elementary symmetric functions}
of $1,1/2,...,1/n$, that is,
$$ H(n,k) := \sum_{1 \leq i_1 < \cdots < i_k \leq n} \frac{ 1 } { i_1 \cdots i_k }. $$
As usual, for any prime $p$ and for any integer $n$,
we let $v_p(n)$ stands for the {\it $p$-adic valuation}
of $n$, i.e., $v_p(n)$ is the biggest nonnegative integer
$r$ with $p^r$ dividing $n$. If $x=\frac{n_1}{n_2}$,
where $n_1$ and $n_2$ are integers and $n_2\ne 0$,
then we define $v_p(x):=v_p(n_1)-v_p(n_2)$.
The integrality problem on $H(n,k)$, studied by Theisinger \cite{[T]}, Nagell \cite{[N]}
and Erd\H{o}s and Niven \cite{[EN]}, and recently by Chen and Tang \cite{[CT]},
Hong and Wang \cite{[HW]} \cite{[WH]} as well as Luo, Hong, Qian and Wang \cite{[LHQW]},
is close to the $p$-adic valuation of $H(n,k)$. However, the well-known Legendre
formula about the $p$-adic valuation of the factorial tells us that
$$v_p(n!)=\frac{n-d_p(n)}{p-1},$$
where $d_p(n)$ represents the base $p$ digital sum of $n$.
Hence the investigation of $v_p(H(n,k))$  is equivalent to
the investigation of $v_p(s(n+1,k+1))$.

Let $p$ be a prime and $n$ be a positive integer. In recent years, 
some progress on $p$-adic valuations of $s(n,k)$ were made by several
authors. In \cite{[L]}, Lengyel showed
that, for any positive integer $k$, the $p$-adic valuation of $s(n,k)$ goes
to infinity when $n$ approaches infinity. Moreover, Lengyel \cite{[L]} proved
that there exists a constant $c'=c'(k,p)> 0$ so that for any $n\ge n_0(k,p)$,
one has $v_p(s(n,k))\ge c'n$. This implies that the $p$-adic valuation of
$H(n,k)$ has a lower bound. Consequently, Leonetti and Sanna \cite{[LS]}
conjectured that there exists a positive constant $c=c(k,p)$ such that
\begin{align}
v_p(H(n,k))<-c\log n\label{1.2}
\end{align}
for all large $n$ and confirmed this conjecture for some special cases.
Furthermore, if $k$ is a nonnegative integer and $n$ is of the form $n=kp^r+m$
with $0\le m<p^r$, then Komatsu and Young proved in \cite{[KY]} that
$v_p(s(n+1,k+1))=v_p(n!)-v_p(k!)-kr$.
Recently, using the study of the higher order Bernoulli numbers $B_n^{(l)}$,
Adelberg \cite{[Ad]} investigated some $p$-adic properties of Stirling
numbers of both kinds. These motivate us to further study the $p$-adic
valuation of $s(n,k)$. In this paper, we are mainly concerned with
the $2$-adic valuation of the Stirling numbers of the first kind.

We refer the readers to \cite{[HZ1],[TL],[TL2],[TL3],[Wan],[HZ2],[HZ3]}
on some results on the 2-adic valuation of $S(n,k)$. In 1994,
Lengyel \cite{[TL]} conjectured, proved by Wannemacker \cite{[Wan]}
in 2005, the 2-adic valuation of $S(2^n,k)$ equals $d_2(k)-1$.
However, one finds that the 2-adic valuation of $s(2^n,k)$ 
is much more complicated than the second kind case. 
Lengyel \cite{[L]} proved that $v_2(2^n, 2))=2^n-2n$ and  
$v_2(2^n, 3))=2^n-3n+3$. Komatsu and Young \cite{[KY]} 
used the theory of Newton polygon to show that
$v_2(s(2^n,2^m))=2^n-2^m(n-m+1)$ with $n$ and $m$ being 
positive integers and $n\geq m$. In this paper, by
introducing the concept of $m$-th Stirling numbers of the first kind
and supplying a detailed 2-adic analysis, we are able to evaluate
$v_2(s(2^n, k))$. Evidently, $v_2(s(2,1))=v_2(s(2,2))=v_2(s(2^n,2^n))=v_2(1)=0$
and $v_2(s(2^n,2^n-1))=v_2(\binom{2^n}{2})=n-1$.
Now let $n$ and $i$ be integers with $n\ge 2$ and $1\le i\le 2^n-2$.
Then one can write $i=2^m-k$ for $(m,k)\in T_n$, where
$$
T_n:=\{(m,k) : 2\le m\le n, 2\le k\le 2^{m-1}+1, m, k\in \mathbb{Z}\}.
$$
For any given real number $y$, by $\lfloor y\rfloor$ and $\lceil y\rceil$
we denote the largest integer no more than $y$ and the smallest integer
no less than $y$, respectively. We can now state the first
main result of this paper as follows.
\begin{thm}\label{thm1}
For any integer $n,m$ and $k$ such that $2\le m\le n$ and
$2\le k\le 2^{m-1}+1$, we have
\begin{align}\label{1.3}
v_2(s(2^n,2^m-k)) = 2^n-2^m-(n-m)\Big(2^m-2\Big\lfloor\frac{k}{2}\Big\rfloor\Big)
+ m-2-v_2\Big(\Big\lfloor\frac{k}{2}\Big\rfloor\Big)+(n-1)\epsilon_k,
\end{align}
where $\epsilon_k=0$ if $k$ is even, and $\epsilon_k=1$ if $k$ is odd.
\end{thm}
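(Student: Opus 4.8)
The plan is to work directly with the generating identity $(x)_{2^n}=\prod_{j=0}^{2^n-1}(x+j)=\sum_{k}s(2^n,k)x^k$, so that $s(2^n,2^m-k)$ is the elementary symmetric function $e_{2^n-2^m+k}(1,2,\dots,2^n-1)$, equivalently, by \eqref{1.1}, equal to $(2^n-1)!\,H(2^n-1,2^m-k-1)$. First I would partition the index set $\{1,\dots,2^n-1\}$ according to the $2$-adic valuation of its elements: the block $A_t=\{j:v_2(j)=t\}$ has $2^{n-1-t}$ elements and equals $2^t$ times the set of consecutive odd numbers $O_{n-t}:=\{1,3,\dots,2^{n-t}-1\}$. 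Expanding the elementary symmetric function over this partition factors an explicit power of $2$ out of each block and reduces the whole computation to the elementary symmetric functions $e_j(O_s)$ of consecutive odd numbers. It is precisely these quantities that I would package as the $m$-th Stirling numbers of the first kind, namely the coefficients of $P_s(x):=\prod_{o\in O_s}(x+o)$.

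The key structural tool is the factorization $P_s(x)=P_{s-1}(x)\,P_{s-1}(x+2^{s-1})$, obtained by splitting $O_s$ into the odd numbers below and above $2^{s-1}$; Taylor-expanding the second factor yields $P_s(x)\equiv P_{s-1}(x)^2 \pmod{2^{s-1}}$ together with explicit lower-order corrections governed by the derivatives of $P_{s-1}$. Using this congruence I would prove, by induction on $s$, an exact formula for $v_2(e_j(O_s))$ for all relevant $j$. The evaluations recalled in the introduction, $v_2(s(2^n,2))=2^n-2n$, $v_2(s(2^n,3))=2^n-3n+3$ and $v_2(s(2^n,2^m))=2^n-2^m(n-m+1)$, furnish stringent consistency checks and natural base cases for the induction.

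With the valuation of each block in hand, I would reassemble $s(2^n,2^m-k)$ as a sum of products $\prod_t 2^{t c_t}e_{c_t}(O_{n-t})$ over all ways of choosing $c_t$ factors from $A_t$ with $\sum_t c_t=2^n-2^m+k$, and then determine the minimal $2$-adic valuation among these summands. This is an optimization over exclusion patterns, i.e.\ over which $2^m-k-1$ factors to drop; solving it selects a pattern that naturally produces the quantities $\lfloor k/2\rfloor$, the correction $v_2(\lfloor k/2\rfloor)$, and the parity indicator $\epsilon_k$ that appear in \eqref{1.3}. Matching the exponent of $2$ coming from the factors $2^{tc_t}$ and from $v_2((2^n-1)!)=2^n-n-1$ against the block valuations should reproduce the leading terms $2^n-2^m$ and $-(n-m)\big(2^m-2\lfloor k/2\rfloor\big)$.

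The hard part will be the cancellation analysis. Several summands may share the minimal valuation, and I must show that their sum is a $2$-adic unit times the predicted power of $2$, rather than being divisible by a higher power. This requires tracking the leading contributions modulo the next power of $2$, where the parity of $k$ enters decisively: when $k$ is odd an extra contribution survives and yields the term $(n-1)\epsilon_k$, whereas when $k$ is even the corresponding terms combine differently. Controlling these congruences uniformly in $n$, via the recursion $P_s\equiv P_{s-1}^2 \pmod{2^{s-1}}$ and its refinements, is the main obstacle; once the non-vanishing of the leading coefficient is established, formula \eqref{1.3} follows by collecting the exponents.
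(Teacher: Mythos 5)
Your reduction is sound as bookkeeping: $s(2^n,2^m-k)=e_{2^n-2^m+k}(1,2,\dots,2^n-1)$, the stratification $\{1,\dots,2^n-1\}=\bigcup_{t}2^tO_{n-t}$, and the expansion of the elementary symmetric function over blocks are all correct, and your factorization $P_s(x)=P_{s-1}(x)P_{s-1}(x+2^{s-1})$ is the right kind of self-similarity --- it is the odd-number analogue of the identity $(x)_{2^{n+1}}=(x)_{2^n}(x+2^n)_{2^n}$ that actually drives the paper's proof (via the convolution of Lemma \ref{lem2} and the numbers $s_{2^n}(2^n,\cdot)$). But there are two genuine gaps. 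First, the congruence $P_s\equiv P_{s-1}^2\pmod{2^{s-1}}$ is far too weak to yield exact valuations of $e_j(O_s)$ by induction on $s$: already $v_2(e_1(O_s))=2s-2>s-1$ for $s\ge 2$, so for the coefficients you need, the ``explicit lower-order corrections'' are not corrections at all --- they carry the entire valuation, and controlling them exactly is the problem you started with. The paper faces the same phenomenon in Lemma \ref{lem4} and resolves it only because the full inductive formula \eqref{1.3} at \emph{every} higher index is available to bound each correction term by the target valuation plus $2$; your induction on $s$ sets up no analogous input.

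Second, and more seriously, the ``cancellation analysis'' you defer is not a refinement but is the whole theorem in the odd-$k$ case. For odd $k$ the valuation jumps by $n-1$ relative to $k-1$, which means the summands of minimal valuation in your multi-block expansion must cancel through roughly $n$ binary digits; to certify what survives you would need the leading $2$-adic units of the $e_j(O_s)$ modulo $2^n$, uniformly in $n$ --- much finer information than the valuations your induction would produce --- and you offer no mechanism for obtaining it. The paper avoids this entirely by a device absent from your plan: the identity of Lemma \ref{lem1}, valid when $n+k$ is odd, expresses $s(n,k)$ as $\frac{1}{2}\sum_{i=k+1}^{n}s(n,i)\binom{i-1}{i-k}n^{i-k}(-1)^{n-i}$, whose explicit powers of $n=2^n$ yield Lemma \ref{lem5}, $v_2(s(2^n,2i-1))=v_2(s(2^n,2i))+n-1$, and hence the $(n-1)\epsilon_k$ term with no digit-tracking at all. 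Even in the even-$k$ case the paper keeps ties manageable by doubling only once per induction step and isolating exactly two dominant summands (the set $A$, with the pairing $i\leftrightarrow 2^m-k-i$ pushing everything else up by at least one), whereas your full valuation-stratified product creates many tied compositions $(c_t)$ whose interaction would still have to be resolved term by term. As it stands, the proposal is a plausible program with its two hardest steps asserted rather than proved.
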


For the Stirling numbers of the second kind, Hong, Zhao and Zhao \cite{[HZ1]}
confirmed a conjecture of Amdeberhan, Manna and Moll \cite{[Am]} raised in 2008
by showing that
$$v_2(S(2^n+1,k+1))=v_2(S(2^n,k)).$$
By using Theorem 1.1, we establish the following similar result
for the Stirling numbers of the first kind which is the second 
main result of this paper.

\begin{thm}\label{thm2}
For any positive integer $n$ and $k$ such that $k\le 2^n$, we have
\begin{align}\label{1.4}
v_2(s(2^n+1,k+1))=v_2(s(2^n,k)).
\end{align}
\end{thm}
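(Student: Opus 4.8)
The plan is to reduce Theorem \ref{thm2} to a single strict inequality between consecutive $2$-adic valuations and then to settle that inequality using the explicit formula of Theorem \ref{thm1}. First I would record the three-term recurrence for the Stirling numbers of the first kind: comparing coefficients of $x^{k+1}$ in $(x)_{N+1}=(x)_N(x+N)$ yields $s(N+1,k+1)=s(N,k)+N\,s(N,k+1)$, and taking $N=2^n$ gives
$$s(2^n+1,k+1)=s(2^n,k)+2^n s(2^n,k+1).$$
Since $v_2\big(2^n s(2^n,k+1)\big)=n+v_2(s(2^n,k+1))$, the ultrametric property of $v_2$ forces $v_2(s(2^n+1,k+1))=v_2(s(2^n,k))$ the moment we know the strict inequality
$$v_2(s(2^n,k))<n+v_2(s(2^n,k+1))\qquad(1\le k\le 2^n).$$
Thus the whole theorem rests on this one gap estimate, which I abbreviate by writing $f(j):=v_2(s(2^n,j))$ and asking that $f(k)-f(k+1)<n$.

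For $n=1$ the inequality is a direct check, so assume $n\ge2$. The endpoints are painless: for $k=2^n$ the right-hand side is infinite; for $k=2^n-1$ and $k=2^n-2$ one invokes $f(2^n)=0$ together with the value $f(2^n-1)=v_2\big(\binom{2^n}{2}\big)=n-1$ recorded in the introduction (and Theorem \ref{thm1} for $f(2^n-2)$). For $1\le k\le 2^n-3$ both $k$ and $k+1$ lie in $[1,2^n-2]$, so I would write each of them as $2^m-t$ with $(m,t)\in T_n$ and substitute into \eqref{1.3}. Because the intervals $[2^{m-1}-1,2^m-2]$ partition $[1,2^n-2]$, only two configurations arise: either $k$ and $k+1$ share the same $m$ (so $t$ drops by one), or $k+1$ begins the next interval.

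In the same-interval case the formula collapses cleanly: when $t$ is odd the contributions from $\lfloor t/2\rfloor$ cancel and only the $\epsilon_t$ term survives, giving $f(k)-f(k+1)=n-1$; when $t$ is even one gets $f(k)-f(k+1)=n-2m+1-v_2(t/2)+v_2(t/2-1)$, and the crude bound $v_2(t/2-1)\le m-3$ (valid since $t/2-1<2^{m-2}$) keeps this below $n$. At an interval boundary, where $k=2^m-2$ and $k+1=2^m-1$ opens the interval indexed by $m+1$, a short substitution gives $f(k)-f(k+1)=n-m-1<n$.

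The main obstacle is this bookkeeping rather than any single hard estimate: one must set up the parametrization $j=2^m-t$ so that the passage from $j$ to $j+1$ is handled uniformly, and in particular must control the jump in $m$ at the interval boundaries, where the dominant term $2^n-2^m(n-m+1)$ changes. The tightest case is $t$ odd, where the gap equals exactly $n-1$; it is essential that it never reaches $n$, for that single unit of slack is precisely what the ultrametric argument in the first step consumes. I therefore expect the real work to be the careful evaluation of \eqref{1.3} at the two flavours of consecutive indices and the verification that the boundary jump in $m$ does not destroy the inequality.
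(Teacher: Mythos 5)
Your proposal is correct and follows essentially the same route as the paper: both start from the recurrence $s(2^n+1,k+1)=2^n s(2^n,k+1)+s(2^n,k)$, reduce the theorem via the ultrametric (isosceles triangle) inequality to the gap estimate $v_2(s(2^n,k))-v_2(s(2^n,k+1))<n$, and settle that estimate using the explicit formula of Theorem \ref{thm1}. The only immaterial difference is in bookkeeping: you verify the gap by direct substitution of consecutive indices $2^m-t$ and $2^m-(t-1)$ into \eqref{1.3} (obtaining exactly $n-1$ for $t$ odd, $n-2m+1-v_2(t/2)+v_2(t/2-1)\le n-m-2$ for $t$ even, and $n-m-1$ at interval boundaries, all of which check out), whereas the paper dispatches the odd-$k$ case by citing Lemma \ref{lem5} and the even-$k$ case by combining Lemma \ref{lem5} with the auxiliary inequality \eqref{3.54}, both of which are themselves consequences of the same formula.
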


Lengyel proved in \cite{[L]} that, for any prime $p$, any integer 
$a\ge 1$ with $(a,p)=1$, and any even $k\ge 2$ with the condition
\begin{align}\label{1.5}
\exists n_1: n_1> 3\log_p{k}+\log_p{a}\ {\rm such\ that} 
\  v_p(s(ap^{n_1},ap^{n_1}-k))< n_1
\end{align}
or $k=1$ with $n_1=1$, then for $n\ge n_1$ one has
$$v_p(s(ap^{n+1},ap^{n+1}-k))=v_p(s(ap^n,ap^n-k))+1.$$
Meanwhile, Lengyel believed that (\ref{1.5}) holds for all even $k\ge 2$. 
Furthermore, for any odd $k\ge 3$, Lengyel conjectured 
in \cite{[L]} that, for any integer $n\ge n_1(p,k)$
with some sufficiently large $n_1(p,k)$, one has
\begin{align}
v_p(s(ap^{n+1},ap^{n+1}-k))=v_p(s(ap^n,ap^n-k))+2.\label{1.6}
\end{align}
Now letting $m=n$, Theorem 1.1 gives us the following result.
\begin{cor}\label{cor1}
For any integer $n$ and $k$ such that $n\ge 2$ and 
$2\le k\le 2^{n-1}+1$, we have
\begin{align*}
v_2(s(2^n,2^n-k)) =\Big\{\begin{array}{ll}n-1-v_2(k) 
& \ {\it if}\ 2\mid k, \\
2n-2-v_2(k-1) & \ {\it if}\ 2\nmid k. \end{array}
\end{align*}
\end{cor}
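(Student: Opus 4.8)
The plan is to obtain the corollary as a direct specialization of Theorem~\ref{thm1}: I would set $m=n$ in formula~\eqref{1.3} and simplify the right-hand side. First I would check that the hypotheses are compatible. Theorem~\ref{thm1} requires $2\le m\le n$ and $2\le k\le 2^{m-1}+1$; taking $m=n$ these become precisely $n\ge 2$ and $2\le k\le 2^{n-1}+1$, which are exactly the hypotheses of the corollary. Hence the substitution is legitimate throughout the stated range and no extra case analysis on $k$ beyond its parity will be required.

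Next I would substitute $m=n$ term by term in \eqref{1.3}. The factor $2^n-2^m$ vanishes, and since $n-m=0$ the entire middle summand $(n-m)\big(2^m-2\lfloor k/2\rfloor\big)$ drops out as well. What survives is
$$v_2(s(2^n,2^n-k)) = n-2-v_2\Big(\Big\lfloor\tfrac{k}{2}\Big\rfloor\Big)+(n-1)\epsilon_k.$$
Everything now reduces to evaluating $v_2\big(\lfloor k/2\rfloor\big)$ according to the parity of $k$, which is the same quantity that governs the switch $\epsilon_k$.

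Finally I would split into the two parity cases. If $k$ is even then $\epsilon_k=0$ and $\lfloor k/2\rfloor=k/2$, so $v_2\big(\lfloor k/2\rfloor\big)=v_2(k)-1$; substituting yields $n-1-v_2(k)$. If $k$ is odd then $\epsilon_k=1$ and $\lfloor k/2\rfloor=(k-1)/2$ with $k-1$ even, so $v_2\big(\lfloor k/2\rfloor\big)=v_2(k-1)-1$; substituting yields $(n-2)-\big(v_2(k-1)-1\big)+(n-1)=2n-2-v_2(k-1)$. These are exactly the two branches claimed. I should be honest that there is essentially no obstacle here: all the genuine content is already contained in Theorem~\ref{thm1}, and the only point requiring care is the elementary bookkeeping of $\lfloor k/2\rfloor$ together with the $\epsilon_k$ switch across the two parities, where the single realistic pitfall is an off-by-one error in the valuation coming from the factor of $2$ removed by the floor.
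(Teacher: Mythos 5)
Your proposal is correct and coincides exactly with the paper's derivation: the paper obtains Corollary~\ref{cor1} by setting $m=n$ in \eqref{1.3}, and your parity bookkeeping ($v_2(\lfloor k/2\rfloor)=v_2(k)-1$ for even $k$, $v_2(\lfloor k/2\rfloor)=v_2(k-1)-1$ for odd $k$) matches the stated formulas. Nothing further is needed.
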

\noindent Evidently, Corollary \ref{cor1} infers that when $p=2$ and $a=1$
with $n_1\ge \lceil \log_2{k} \rceil+1$, Condition (\ref{1.5}) always holds 
for all even $k\ge 2$ and Conjecture (\ref{1.6}) is true. We point out
that there are two typos in equality (3.7) and the one below (3.8)
in \cite{[L]}, where $n+1$ should read as $n$.

Another consequence of Theorem \ref{thm1} is the following interesting result.
\begin{cor}\label{cor2}
For any positive integer $n$ and $k$ such that $k\le 2^n$, we have
\begin{align}\label{1.7}
v_2(s(2^n,k)) \le v_2(s(2^n,1)).
\end{align}
\end{cor}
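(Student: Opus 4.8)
The plan is to first pin down the target value $v_2(s(2^n,1))$ and then compare every other $v_2(s(2^n,k))$ against it using Theorem~\ref{thm1}. Since $s(n,1)=(n-1)!$ counts the permutations of $n$ elements consisting of a single cycle, we have $s(2^n,1)=(2^n-1)!$. The binary expansion of $2^n-1$ consists of exactly $n$ ones, so $d_2(2^n-1)=n$, and Legendre's formula gives
\begin{align*}
v_2(s(2^n,1))=v_2((2^n-1)!)=(2^n-1)-d_2(2^n-1)=2^n-1-n.
\end{align*}
Thus it suffices to show $v_2(s(2^n,k))\le 2^n-1-n$ for every $k$ with $1\le k\le 2^n$.

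Next I would dispose of the boundary indices. For $k=1$ the inequality is an equality. For $k=2^n$ and $k=2^n-1$ the values $v_2(s(2^n,2^n))=0$ and $v_2(s(2^n,2^n-1))=n-1$ recorded in the introduction reduce the claim to the elementary estimates $0\le 2^n-1-n$ and $n-1\le 2^n-1-n$, i.e. $2n\le 2^n$, which hold for all $n\ge 2$; the case $n=1$ is checked by hand.

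For the remaining range $2\le k\le 2^n-2$ I would write $k=2^m-j$ with $(m,j)\in T_n$, so that $2\le m\le n$ and $2\le j\le 2^{m-1}+1$, and apply Theorem~\ref{thm1}. Setting $t:=\lfloor j/2\rfloor$, the desired inequality $v_2(s(2^n,2^m-j))\le 2^n-1-n$ becomes, after cancelling $2^n$ and rearranging, the claim that
\begin{align*}
F:=2^m+(n-m)\bigl(2^m-2t\bigr)+v_2(t)+1-n-m-(n-1)\epsilon_j\ge 0.
\end{align*}
The two inputs that drive the estimate are $2^m-2t\ge 2^{m-1}$ (which follows from $t=\lfloor j/2\rfloor\le 2^{m-2}$, a consequence of $j\le 2^{m-1}+1$) and $v_2(t)\ge 0$. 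I would then split on the parity of $j$: when $\epsilon_j=0$ the term $-(n-1)\epsilon_j$ vanishes and $F\ge 0$ follows quickly from $2^m\ge 2m$ for $m\ge 2$; when $\epsilon_j=1$ one substitutes $n=m+d$ with $d=n-m\ge 0$ and uses $(n-m)(2^m-2t)\ge d\,2^{m-1}$ to absorb the $-(n-1)$ contribution, reducing to an inequality of the shape $2^m+d(2^{m-1}-2)+2-3m\ge 0$ valid for $m\ge 2$.

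The main obstacle is precisely the odd case $\epsilon_j=1$, where the correction $-(n-1)$ is large; here the positive mass must come from the product $(n-m)(2^m-2t)$, so the bound $2^m-2t\ge 2^{m-1}$ is indispensable. It is worth noting that the extremal configurations of $F$ are meaningful rather than accidental: $F=0$ occurs both at $m=n$ (matching the sharp values in Corollary~\ref{cor1}) and at $(m,j)=(2,3)$, which is exactly the representation $k=2^m-j=1$ of the index realizing equality. This confirms that the bound in \eqref{1.7} is attained precisely at $k=1$, as expected.
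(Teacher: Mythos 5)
Your proposal is correct and is essentially the paper's own proof: both compute $v_2(s(2^n,1))=2^n-n-1$, dispose of $k\in\{2^n-1,2^n\}$ directly, write the remaining $k$ as $2^m-j$ with $(m,j)\in T_n$, and show nonnegativity of the same quantity (your $F$ is exactly the paper's $D_j$) by splitting on the parity of $j$, your uniform bound $2^m-2\lfloor j/2\rfloor\ge 2^{m-1}$ playing the same role as the paper's estimates $j\le 2^{m-1}$ (even case) and $j\le 2^{m-1}+1$ (odd case). One cosmetic slip in your closing remark: equality $F=0$ is attained at $(m,j)=(2,3)$, i.e.\ at $k=1$, for every $n$ (since the coefficient $2^{m-1}-2$ of $d=n-m$ vanishes at $m=2$), not ``at $m=n$'' in general, but this aside plays no role in the argument.
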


From Theorem \ref{thm2} and Corollary \ref{cor2}, we can
derive an upper bound for $v_2(H(2^n,k))$ as follows.

\begin{cor}\label{cor3}
For any positive integer $n$ and $k$ such that $k\le 2^n$, we have
\begin{align}
v_2(H(2^n,k)) \leq -n.\label{1.8}
\end{align}
\end{cor}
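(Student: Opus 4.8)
The plan is to deduce Corollary \ref{cor3} from the two facts just established, namely Theorem \ref{thm2} and Corollary \ref{cor2}, together with the defining identity \eqref{1.1} relating $s(n+1,k+1)$ to the elementary symmetric function $H(n,k)$. The key observation is that \eqref{1.1} with $n$ replaced by $2^n$ reads $s(2^n+1,k+1)=(2^n)!\,H(2^n,k)$, so that
\begin{align*}
v_2(H(2^n,k))=v_2(s(2^n+1,k+1))-v_2((2^n)!).
\end{align*}
By Theorem \ref{thm2} the first term on the right equals $v_2(s(2^n,k))$, and by Corollary \ref{cor2} this is at most $v_2(s(2^n,1))$. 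Thus everything reduces to computing $v_2(s(2^n,1))$ and $v_2((2^n)!)$ explicitly.

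Both of these are routine. For the factorial, Legendre's formula gives $v_2((2^n)!)=2^n-d_2(2^n)=2^n-1$ since $2^n$ has a single nonzero binary digit. For $s(2^n,1)$, recall that $s(N,1)=(-1)^{N-1}(N-1)!$, the number of $N$-cycles being $(N-1)!$, whence $v_2(s(2^n,1))=v_2((2^n-1)!)=2^n-1-d_2(2^n-1)=2^n-1-n$, because $2^n-1$ is a string of $n$ ones in binary. Substituting, I would obtain
\begin{align*}
v_2(H(2^n,k))\le v_2(s(2^n,1))-v_2((2^n)!)=(2^n-1-n)-(2^n-1)=-n,
\end{align*}
which is exactly \eqref{1.8}.

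I do not anticipate any genuine obstacle here, since Corollary \ref{cor3} is a direct corollary in the literal sense: the substantive work has already been done in proving Theorem \ref{thm2} and Corollary \ref{cor2}. The only point requiring minor care is the bookkeeping of the two $2$-adic valuations $v_2(s(2^n,1))=2^n-n-1$ and $v_2((2^n)!)=2^n-1$; one must make sure the digit sums $d_2(2^n)=1$ and $d_2(2^n-1)=n$ are used correctly so that the difference collapses cleanly to $-n$. Alternatively, and perhaps more transparently, I could note that the chain $v_2(H(2^n,k))=v_2(s(2^n,k))-v_2((2^n)!)$ combined with Corollary \ref{cor2} shows the bound is attained precisely at $k=1$, so that $-n$ is the sharpest such uniform upper bound, which also clarifies why this result supports the Leonetti--Sanna conjecture \eqref{1.2} in the case $p=2$, $n\mapsto 2^n$.
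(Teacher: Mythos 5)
Your proposal is correct and takes essentially the same route as the paper: both reduce via identity \eqref{1.1}, Theorem \ref{thm2} and Corollary \ref{cor2} to the case $k=1$, and the only (inessential) difference is the last step, where you evaluate $v_2(s(2^n,1))-v_2((2^n)!)=(2^n-n-1)-(2^n-1)=-n$ directly by Legendre's formula, while the paper rewrites this difference as $v_2(H(2^n,1))$ and obtains $-n$ from the fact that in $\sum_{k=1}^{2^n}\frac{1}{k}$ the unique term of minimal valuation is $\frac{1}{2^n}$. One trivial convention point: with the paper's rising-factorial definition the Stirling numbers are unsigned, so $s(2^n,1)=(2^n-1)!$ with no sign factor, though this does not affect any valuation.
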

\noindent Clearly, Corollary \ref{cor3} confirms partially Conjecture
(\ref{1.2}) that is due to Leonetti and Sanna and raised in \cite{[LS]}.

This paper is organized as follows. First of all, in the next section,
we introduce the concept of the $m$-th Stirling numbers of the first kind
and reveal some useful properties of Stirling numbers of the first kind.
Subsequently, we prove Theorem \ref{thm1} in Section 3, and show
Theorem \ref{thm2} in Section 4. Finally, the proofs of Corollaries
\ref{cor2} and \ref{cor3} are presented in the last section.

\section{Auxiliary results on Stirling numbers of the first kind}

Let $n$ and $k$ be positive integers. Some basic identities involving
Stirling numbers of the first kind can be listed as follows (see \cite{[LC]}):
$$s(n+1,k)=ns(n,k)+s(n,k-1),\ s(n,0)=0,\ s(n,k)=0 \ {\rm if}\ k\ge n+1,$$
$$\ s(n,1)=(n-1)!,\ s(n,n)=1,\ s(n,n-1)=\binom{n}{2},$$
$$s(n,2)=(n-1)!\sum_{k=1}^{n-1}\frac{1}{k}\ {\rm and}\
s(n,n-2)=\frac{1}{4}(3n-1)\binom{n}{3}\ {\rm if}\ n\ge 2.$$

The following lemma can be derived from the general formula for Stirling
numbers of the first kind in terms of harmonic numbers and the Pochhammer symbol.
\begin{lem}\label{lem1} \cite{[VA]}
Let $n$ and $k$ be positive integers. If $n+k$ is odd, then
$$ s(n,k)=\frac{1}{2}\sum_{i=k+1}^{n}s(n,i)\binom{i-1}{i-k}n^{i-k}(-1)^{n-i}.$$
\end{lem}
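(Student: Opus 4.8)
The plan is to reprove this identity directly from the generating-function definition $(x)_n=\sum_{k=0}^{n}s(n,k)x^{k}$, so that it becomes self-contained and, in particular, so that the factor $n^{i-k}$ (rather than $(n-1)^{i-k}$) emerges transparently. First I would peel off the trivial root at the origin, writing $(x)_n=x\,g(x)$ with $g(x):=(x+1)_{n-1}=\prod_{j=1}^{n-1}(x+j)$. Since the coefficient of $x^{l}$ in $g$ equals the coefficient of $x^{l+1}$ in $(x)_n$, we have $g(x)=\sum_{l=0}^{n-1}s(n,l+1)x^{l}$, and it suffices to work with $g$.

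The crux is a reflection symmetry of $g$. Its roots $-1,-2,\dots,-(n-1)$ are symmetric about $-n/2$, so the substitution $x\mapsto -n-x$ permutes them; computing the product of the reflected linear factors gives
\[
g(-n-x)=(-1)^{n-1}g(x).
\]
Expanding the left-hand side by the binomial theorem, $g(-n-x)=\sum_{l}s(n,l+1)(-1)^{l}\sum_{j=0}^{l}\binom{l}{j}n^{l-j}x^{j}$, and comparing the coefficient of $x^{k-1}$ on both sides, then multiplying through by $(-1)^{n-1}$ and reindexing via $i=l+1$ (together with $\binom{i-1}{k-1}=\binom{i-1}{i-k}$), yields the master identity
\[
\sum_{i=k}^{n}s(n,i)\binom{i-1}{i-k}n^{i-k}(-1)^{n-i}=s(n,k),
\]
which holds for all positive integers $n,k$ with $k\le n$, irrespective of parity.

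Finally I would specialize to the hypothesis that $n+k$ is odd. The diagonal term $i=k$ of the master identity equals $s(n,k)(-1)^{n-k}$, and since $n-k$ is odd this is exactly $-s(n,k)$. Transposing it to the left-hand side collapses the two copies of $s(n,k)$ into $2s(n,k)$, and dividing by $2$ gives precisely the stated formula $s(n,k)=\tfrac12\sum_{i=k+1}^{n}s(n,i)\binom{i-1}{i-k}n^{i-k}(-1)^{n-i}$. I expect the only real obstacle to be structural rather than computational: one must factor out the $x$ before reflecting, so that the symmetry centre is $-n/2$ and the resulting Taylor coefficients carry $n^{i-k}$; reflecting $(x)_n$ itself would instead center at $-(n-1)/2$ and produce $(n-1)^{i-k}$ with the binomial $\binom{i}{k}$, a correct but different identity. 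The parity assumption is used in exactly one place, namely to force the diagonal term to reinforce rather than cancel, which is also why the case $n+k$ even degenerates to the trivial statement $0=0$.
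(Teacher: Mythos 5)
Your proof is correct, and it is genuinely different from what the paper does: the paper gives no proof at all for this lemma, quoting it from Adamchik \cite{[VA]} with only the remark that it ``can be derived from the general formula for Stirling numbers of the first kind in terms of harmonic numbers and the Pochhammer symbol.'' Your route is self-contained and more elementary. I checked the key steps: with $g(x):=(x+1)_{n-1}=\sum_{l=0}^{n-1}s(n,l+1)x^{l}$, the roots $-1,\dots,-(n-1)$ are indeed permuted by $x\mapsto -n-x$, giving $g(-n-x)=(-1)^{n-1}g(x)$; extracting the coefficient of $x^{k-1}$ and reindexing with $i=l+1$ yields exactly $\sum_{i=k}^{n}s(n,i)\binom{i-1}{i-k}n^{i-k}(-1)^{n-i}=s(n,k)$ (note $(-1)^{l+n-1}=(-1)^{n-i}$ and $\binom{i-1}{k-1}=\binom{i-1}{i-k}$), and when $n+k$ is odd the diagonal term $(-1)^{n-k}s(n,k)=-s(n,k)$ moves across to produce the stated factor $\tfrac12$; numerical spot checks (e.g.\ $n=4$, $k=1$: $\tfrac12(11\cdot 4-6\cdot 16+64)=6=s(4,1)$) confirm the signs. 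Your structural observation is also the right one: factoring out $x$ before reflecting is what centres the symmetry at $-n/2$ and produces $n^{i-k}$ with $\binom{i-1}{i-k}$, whereas reflecting $(x)_n$ itself would give the different (also true) identity with $(n-1)^{i-k}$ and $\binom{i}{i-k}$. One cosmetic imprecision: when $n+k$ is even your argument does not degenerate to ``$0=0$'' but rather yields the genuine vanishing identity $\sum_{i=k+1}^{n}s(n,i)\binom{i-1}{i-k}n^{i-k}(-1)^{n-i}=0$; the correct summary is that in that case the diagonal term cancels rather than reinforces, so no formula for $s(n,k)$ results. What your approach buys is a short, citation-free proof from the defining identity $(x)_n=\sum_k s(n,k)x^k$ alone, which fits the paper's toolkit (it is in the same spirit as the coefficient-comparison proofs of Lemmas 2.2 and 2.3) and makes the otherwise mysterious factor $n^{i-k}$ transparent.
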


Now for any nonnegative integer $m$, we introduce the concept of the
{\it $m$-th Stirling numbers of the first kind}, denoted by $s_m(n, k)$
which are defined by the following identity:
\begin{align*}
(x+m)_n=(x+m)(x+m+1)\cdots(x+m+n-1) = \sum_{k=0}^{n} s_m(n,k)x^k,
\end{align*}
where $n$ and $k$ are nonnegative integers such that $n\ge k$.
Clearly, one has $s_0(n,k)=s(n,k)$. Furthermore, we have the
following convolution results on $s_m(n,k)$ that
play a crucial role in the proof of Theorem \ref{thm1}.

\begin{lem}\label{lem2}
Let $m, k$ and $n$ be nonnegative integers. Then
$$  s(m+n,k)=\sum_{i=0}^{k}s(m,i) s_m(n,k-i). $$
\end{lem}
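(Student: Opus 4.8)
The plan is to prove the convolution identity $s(m+n,k)=\sum_{i=0}^{k}s(m,i)\,s_m(n,k-i)$ by comparing coefficients in a polynomial identity that both sides encode. The natural starting point is the defining generating function for Stirling numbers of the first kind, namely $(x)_{m+n}=\sum_{k}s(m+n,k)x^k$. The key observation is the factorization of the Pochhammer symbol
\begin{align*}
(x)_{m+n}=x(x+1)\cdots(x+m+n-1)=(x)_m\cdot(x+m)_n,
\end{align*}
which splits the product of $m+n$ consecutive terms into the first $m$ factors and the last $n$ factors. The second factor is precisely the object whose expansion defines the $m$-th Stirling numbers: $(x+m)_n=\sum_{j}s_m(n,j)x^j$.

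From here I would substitute the two generating-function expansions into the factorization. Writing $(x)_m=\sum_{i}s(m,i)x^i$ and $(x+m)_n=\sum_{j}s_m(n,j)x^j$, the product becomes
\begin{align*}
(x)_{m+n}=\Big(\sum_{i=0}^{m}s(m,i)x^i\Big)\Big(\sum_{j=0}^{n}s_m(n,j)x^j\Big)=\sum_{k}\Big(\sum_{i=0}^{k}s(m,i)s_m(n,k-i)\Big)x^k.
\end{align*}
On the other hand, the left-hand side equals $\sum_{k}s(m+n,k)x^k$ directly from the definition. Comparing the coefficient of $x^k$ on both sides yields the claimed identity.

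The only genuine step requiring care is justifying the range of the inner summation and checking the edge cases. The convention $s(m,i)=0$ for $i>m$ and $s_m(n,k-i)=0$ for $k-i>n$ (equivalently $i<k-n$) ensures that extending the Cauchy-product index $i$ over the full range $0\le i\le k$ introduces no spurious terms, so the stated sum $\sum_{i=0}^{k}$ is correct as written. I expect no real obstacle here: the entire argument is a clean application of the multiplicativity of the Pochhammer symbol together with the definitions, and the main thing to verify is simply that the index conventions make the finite Cauchy product agree with the coefficient extraction. One should also note the degenerate cases $m=0$ (where $s_0(n,k)=s(n,k)$ recovers a trivial identity) and $n=0$ as consistency checks.
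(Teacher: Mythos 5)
Your proposal is correct and follows essentially the same route as the paper: factor $(x)_{m+n}=(x)_m\,(x+m)_n$, expand both factors by the defining generating functions, and compare coefficients of $x^k$. Your extra remarks on the vanishing conventions $s(m,i)=0$ for $i>m$ and the degenerate cases $m=0$, $n=0$ are harmless additions to the same argument.
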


\begin{proof}
First of all, by the definitions of Stirling numbers of the first kind
and of the $m$-th Stirling numbers of the first kind, we deduce that

\begin{align*}
\sum_{k=0}^{m+n}s(m+n,k)x^k &= x(x+1)\cdots(x+m-1)(x+m)\cdots(x+m+n-1)\\
&= \Big(\sum_{i=0}^{m}s(m,i)x^i\Big)\Big(\sum_{j=0}^{n}s_m(n,j)x^j\Big)\\
&=\sum_{i=0}^{m}\sum_{j=0}^{n}s(m,i)s_m(n,j)x^{i+j}\\
&=\sum_{k=0}^{m+n}\Big(\sum_{i+j=k}s(m,i)s_m(n,j)\Big)x^{k}\\
&=\sum_{k=0}^{m+n}\Big(\sum_{i=0}^{k}s(m,i)s_m(n,k-i)\Big)x^{k}.
\end{align*}
Then comparing the coefficients of $x^k$ on both sides gives us the
desired result.
\end{proof}

\begin{lem}\label{lem3}
Let $m, k$ and $n$ be nonnegative integers. Then
$$  s_m(n,k)=\sum_{i=k}^{n}s(n,i)\binom{i}{i-k}m^{i-k}.$$
\end{lem}

\begin{proof}
Using the definitions of $s_m(n,k)$ and $s(n,k)$ and noticing that $s(n, 0)=0$,
we obtain that
\begin{align*}
\sum_{k=0}^{n}s_m(n,k)x^k  &= (x+m)(x+m+1)\cdots(x+m+n-1)\\
&=\sum_{i=0}^{n}s(n,i)(x+m)^i\\
&=\sum_{i=0}^{n}s(n,i)\Big(\sum_{k=0}^{i}\binom{i}{k}x^km^{i-k}\Big)\\
&=\sum_{k=0}^{n}\Big(\sum_{i=k}^{n}s(n,i)\binom{i}{k}m^{i-k}\Big)x^k.
\end{align*}
One then compare the corresponding coefficients on both sides to get
the required result. So Lemma 2.3 is proved.
\end{proof}

Obviously, $s_m(n,k)\equiv s(n,k) \pmod m$ if $m$ is a positive
integer. In particular, we can deduce the following result.

\begin{lem}\label{lem4}
Let $n$ be an integer with $n\ge 2$. If (\ref{1.3}) holds
for all positive integers $m$ and $k$ with $(m,k)\in T_n$,
then for any integer $t$ with $1\le t\le 2^n$, we have
\begin{align}\label{2.1}
v_2(s_{2^n}(2^n,t)) = v_2(s(2^n,t))
\end{align}
and
\begin{align}\label{2.2}
v_2(s_{2^n}(2^n,t)-s(2^n,t))\ge v_2(s(2^n,t))+2.
\end{align}
\end{lem}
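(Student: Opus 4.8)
The plan is to reduce both assertions to a single term-by-term $2$-adic estimate extracted from Lemma~\ref{lem3}. Taking $m=2^n$ and replacing $n$ by $2^n$ and $k$ by $t$ in Lemma~\ref{lem3}, I would first write
\begin{align*}
s_{2^n}(2^n,t)=\sum_{i=t}^{2^n}s(2^n,i)\binom{i}{i-t}(2^n)^{i-t},
\end{align*}
so that the $i=t$ summand is exactly $s(2^n,t)$ and hence
\begin{align*}
s_{2^n}(2^n,t)-s(2^n,t)=\sum_{i=t+1}^{2^n}s(2^n,i)\binom{i}{i-t}(2^n)^{i-t}.
\end{align*}
Observe that (\ref{2.1}) is then an immediate consequence of (\ref{2.2}) via the non-Archimedean (ultrametric) inequality: if the displayed difference has $2$-adic valuation strictly larger than $v_2(s(2^n,t))$, then $v_2(s_{2^n}(2^n,t))=v_2(s(2^n,t))$. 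So it suffices to prove (\ref{2.2}).

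For (\ref{2.2}) I would invoke the ultrametric inequality once more and reduce to showing that every summand has large valuation; writing $d=i-t\ge 1$, the goal becomes
\begin{align*}
v_2(s(2^n,t+d))+v_2\binom{t+d}{d}+nd\ge v_2(s(2^n,t))+2
\end{align*}
for all $1\le d\le 2^n-t$. To evaluate the two valuations $v_2(s(2^n,t))$ and $v_2(s(2^n,t+d))$ I would use the hypothesis that (\ref{1.3}) holds on $T_n$, which supplies $v_2(s(2^n,j))$ for every $1\le j\le 2^n-2$, together with the boundary values $v_2(s(2^n,2^n-1))=n-1$ and $v_2(s(2^n,2^n))=0$ for the two top indices. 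Writing $t=2^{m_1}-k_1$ and $t+d=2^{m_2}-k_2$, the ranges $j\in[2^{m-1}-1,2^m-2]$ that define the blocks of $T_n$ force $m_2\ge m_1$, and substituting (\ref{1.3}) converts the required estimate into an explicit arithmetic inequality in $m_1,m_2,k_1,k_2$ and $d$.

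The heart of the matter, and the step I expect to be hardest, is this last inequality. The leading gain on the left is the term $nd$ coming from the power $(2^n)^{d}$, and the danger is that $v_2(s(2^n,t))-v_2(s(2^n,t+d))$ grows almost as fast. A quick check shows that one \emph{cannot} simply discard the binomial factor: in the parity case where $k_1$ is odd and $k_2$ is even (equivalently $t$ odd and $t+d$ even), the contribution $(n-1)\epsilon_{k_1}$ in (\ref{1.3}) is present at $t$ but absent at $t+d$, which can make $v_2(s(2^n,t))-v_2(s(2^n,t+d))$ as large as $nd-1$, exactly one unit short. The resolution is that in precisely these tight cases $t$ is odd, so $\binom{t+d}{d}$ is even and $v_2\binom{t+d}{d}\ge 1$ recovers the missing unit; more generally I would control $v_2\binom{t+d}{d}$ via Kummer's theorem (the number of carries when adding $d$ and $t$ in base $2$) and play it off against the $\epsilon$- and $\lfloor k/2\rfloor$-terms of (\ref{1.3}). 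The proof thus comes down to a case analysis on the parities of $k_1,k_2$ and on whether $m_1=m_2$ or $m_1<m_2$, bounding $v_2(s(2^n,t))-v_2(s(2^n,t+d))$ by $nd-2+v_2\binom{t+d}{d}$ in each case. I note finally that the crude congruence $s_{2^n}(2^n,t)\equiv s(2^n,t)\pmod{2^n}$, which only yields valuation $\ge n$, is insufficient once $v_2(s(2^n,t))$ exceeds $n-2$, so this refined term-by-term analysis is genuinely necessary.
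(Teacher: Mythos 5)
Your proposal is correct and follows essentially the same route as the paper's own proof: the same expansion of $s_{2^n}(2^n,t)$ via Lemma~\ref{lem3}, the same ultrametric reduction of (\ref{2.1}) to (\ref{2.2}), and the same term-by-term bound $v_2(L_i)\ge v_2(s(2^n,t))+2$ proved by a parity/block case analysis using (\ref{1.3}). In particular, your identification of the tight case ($t$ odd, $t+d$ even, where the gap is one unit short and $v_2\binom{t+1}{1}=v_2(t+1)\ge 1$ restores it) is exactly the paper's subcase $l=m$, $k-j=1$ in Case~1.2, where $\Delta_i=v_2(2^m-k+1)+1\ge 2$.
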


\begin{proof}
The case with $t=2^n$ is clearly true, since $s_{2^n}(2^n,2^n) = s(2^n,2^n)=1$.

If $t=2^n-1$, then
\begin{align}\label{2.3}
s(2^n,t)=s(2^n,2^n-1)=\binom{2^n}{2}=2^{n-1}(2^n-1).
\end{align}
By Lemma \ref{lem3}, one derives that
\begin{align}
s_{2^n}(2^n,t)&=s_{2^n}(2^n,2^n-1)=s(2^n,2^n-1)+s(2^n,2^n)2^n\binom{2^n}{1}\notag\\
&=2^{n-1}(2^n-1)+2^{2n}=2^{n-1}(3\times2^n-1).\label{2.4}
\end{align}
Thus by (\ref{2.3}) and (\ref{2.4}) and noticing that $n\ge 2$, we arrive at
$$
v_2(s_{2^n}(2^n,t)) = v_2(s(2^n,t))=v_2(s(2^n,2^n-1))=n-1
$$
and
$$
v_2(s_{2^n}(2^n,t)-s(2^n,t))=v_2(2^{2n})=2n\ge n-1+3=v_2(s(2^n,t))+3
$$
as expected. Hence (\ref{2.1}) and (\ref{2.2}) are true when $t=2^n$ and $t=2^n-1$.

In what follows, we let $1\le t\le 2^n-2$. Then one may write $t:=2^m-k$
for some $(m,k)\in T_n$. Since (\ref{1.3}) holds for $m$ and $k$, we have
\begin{align}\label{2.5}
v_2(s(2^n,t))&=v_2(s(2^n,2^m-k)) \notag \\
&= 2^n-2^m-(n-m)\Big(2^m-2\Big\lfloor\frac{k}{2}\Big\rfloor\Big)
+ m-2-v_2\Big(\Big\lfloor\frac{k}{2}\Big\rfloor\Big)+(n-1)\epsilon_k,
\end{align}
with $\epsilon_k=0$ when $k$ is even, and $\epsilon_k=1$ when $k$ is odd.

Again, by Lemma \ref{lem3}, one can write
\begin{align}
s_{2^n}(2^n,t)=s(2^n,t) + L,\label{2.6}
\end{align}
where
\begin{align} \label{2.7}
L=\sum_{i=t+1}^{2^n}L_i\ \  {\rm and}\ \ L_i=s(2^n,i)2^{n(i-t)}\binom{i}{i-t}.
\end{align}
For any integer $i$ with $t+1\le i\le 2^n$, let $\Delta_i:=v_2(L_i)-v_2(s(2^n,t)).$
We claim that
\begin{align}\label{2.8}
\Delta_i\ge 2.
\end{align}
Then from this claim, one can deduce that $v_2(L_i)\ge v_2(s(2^n,t))+2$. Hence
\begin{align}\label{2.9}
v_2(s_{2^n}(2^n,t)-s(2^n,t))=v_2(L)=v_2\Big(\sum_{i=t+1}^{2^n} L_i\Big)
\ge\min_{t+1\le i\le 2^n}\{v_2(L_i)\}\ge v_2(s(2^n,t))+2
\end{align}
as (\ref{2.2}) asserted. Furthermore, by (\ref{2.6}), (\ref{2.9})
and using the isosceles triangle principle (see, for example, \cite{[K]}), 
we derive that
$$v_2(s_{2^n}(2^n,t))=v_2(s(2^n,t)+L)=v_2(s(2^n,t))$$
as (\ref{2.1}) required. So to finish the proof of Lemma \ref{lem4}, it remains
to show claim (\ref{2.8}), which will be done in what follows.

First of all, if $i=2^n$, then $s(2^n,2^n)=1$.
So it follows from (\ref{2.7}) and (\ref{2.5}) that
\begin{align}
&\Delta_{i}\notag\\
=&v_2(L_{2^n})-v_2(s(2^n,t))\notag\\
\ge & v_2\Big(s(2^n,2^n)2^{n(2^n-t)}\Big)-v_2(s(2^n,t))=n(2^n-t)-v_2(s(2^n,t)) \label{2.10}\\
=&(n-1)2^n-(m-1)2^m+n\Big(k-2\Big\lfloor\frac{k}{2}\Big\rfloor\Big)
+m\Big(2\Big\lfloor\frac{k}{2}\Big\rfloor-1\Big)+
v_2\Big(\Big\lfloor\frac{k}{2}\Big\rfloor\Big)-(n-1)\epsilon_k+2\notag\\
\ge & n\Big(k-2\Big\lfloor\frac{k}{2}\Big\rfloor\Big)
+m\Big(2\Big\lfloor\frac{k}{2}\Big\rfloor-1\Big)+
v_2\Big(\Big\lfloor\frac{k}{2}\Big\rfloor\Big)-(n-1)\epsilon_k+2\notag\\
\ge & 4 \label{2.11}
\end{align}
since $t=2^m-k$, $n\ge m\ge 2$, $k\ge 2$, $\epsilon_k=0$ if $k$ is even and
$\epsilon_k=1$ if $k$ is odd, and also notice that if $k$ is odd,
then $\lfloor\frac{k}{2}\rfloor=\frac{k-1}{2}$ and $k\ge 3$.
Hence (\ref{2.8}) is proved when $i=2^n$.

Consequently, if $i=2^n-1$, then by (\ref{2.7}), (\ref{2.5}), (\ref{2.3}), (\ref{2.10})
and (\ref{2.11}) one obtains that
\begin{align}
\Delta_{i}&=v_2(L_{2^n-1})-v_2(s(2^n,t))\ge v_2\big(s(2^n,2^n-1)2^{n(2^n-t-1)}\big)-v_2(s(2^n,t))\notag\\
&=v_2(s(2^n,2^n-1))+n(2^n-t-1)-v_2(s(2^n,t))\notag\\
&=n(2^n-t)-v_2(s(2^n,t))-1\notag\\
&\ge 3.\notag
\end{align}
Thus claim (\ref{2.8}) is true when $i=2^n-1$.

Finally, we let $t+1\le i\le 2^n-2$ with $t=2^m-k$. Then we have the following partition:
$$
[2^m-k+1, 2^n-2]\cap\mathbb{Z}=
V\cup\Big(\bigcup_{l=m+1}^n[2^{l-1}-1, 2^l-1)\cap\mathbb{Z}\Big),
$$
where $V=[2^m-k+1, 2^m-1)\cap\mathbb{Z}$ if $k>2$, and $V$ is empty if $k=2$.
Then one may let $i:=2^l-j$ for some $(l,j)\in T_n$ with $l\ge m$.
Notice that $2\le j\le k-1$ if $l=m$.
Then by the definitions of $\Delta_i$ and $L_i$, one gets that
\begin{align}
\Delta_i&=v_2(L_i)-v_2(s(2^n,t))\notag\\
&=v_2\big(s(2^n,i)2^{n(i-t)}\binom{i}{i-t}\big)-v_2(s(2^n,t))\notag\\
&\ge v_2\big(s(2^n,2^l-j)2^{n(2^l-j-2^m+k)}\big)-v_2(s(2^n,2^m-k))\notag\\
&=v_2(s(2^n,2^l-j))+n(2^l-j-2^m+k)-v_2(s(2^n,2^m-k)):=D_{j, k}.\label{2.12}
\end{align}
We divide the proof into the following two cases:

{\sc Case 1.}  $j$ is even. Then with $m$ replaced by $l$ and $k$ by $j$, (\ref{1.3}) gives us that
\begin{align}\label{2.13}
v_2(s(2^n,2^l-j))=2^n-2^l-(n-l)(2^l-j)+l-1-v_2(j),
\end{align}
where $m\le l\le n$, $2\le j\le 2^{l-1}$ and $2\le j\le k-1$ if $l=m$.
Now one considers the following two subcases.

{\sc Case 1.1.} $k$ is even. By (\ref{2.5}) one knows that
\begin{align*}
v_2(s(2^n,2^m-k))=2^n-2^m-(n-m)(2^m-k)+m-1-v_2(k).
\end{align*}
Then together with (\ref{2.12}) and (\ref{2.13}) we obtain that
\begin{align}
D_{j,k}=(l-1)2^l-(m-1)2^m+m(k-1)-l(j-1)+v_2(k)-v_2( j).\label{2.14}
\end{align}

If $l=m$, then $k-j\ge 1$ and $2\le j\le 2^{m-1}$,
and it follows from both of $k$ and
$j$ are even that $k-j\ge 2$, thus by (\ref{2.14}) one can deduce that
\begin{align}\label{2.15}
D_{j,k}= m(k-j)+v_2(k)-v_2(j)\ge2m+v_2(k)-(m-1)=m+v_2(k)+1\ge 4
\end{align}
since $m\ge 2$ and $k$ is even.

If $l\ge m+1$, then by $k\ge 2$ and $2\le j\le 2^{l-1}$ and (\ref{2.14}) we have
\begin{align}
D_{j,k}&\ge (l-1)2^l-(m-1)2^m+m-l(2^{l-1}-1)+v_2(k)-(l-1)\notag\\
&=(l-2)2^{l-1}-(m-1)2^m+m+v_2(k)+1\notag\\
&\ge m+v_2(k)+1\ge 4.\label{2.16}
\end{align}

Hence it follows from (\ref{2.12}) and (\ref{2.15}) and (\ref{2.16})
that $\Delta_i\ge D_{j,k}\ge 4$
when both of $j$ and $k$ are even.
Therefore, claim (\ref{2.8}) is proved in this case.

{\sc Case 1.2.} $k$ is odd. Then $k\ge 3$ and $k-1$ is even.
Again, by (\ref{2.5}) one has
\begin{align}
v_2(s(2^n,2^m-k)) &= 2^n-2^m-(n-m)\Big(2^m-2\Big\lfloor\frac{k}{2}\Big\rfloor\Big)
+ m-2-v_2\Big(\Big\lfloor\frac{k}{2}\Big\rfloor\Big)+(n-1)\notag\\
&= 2^n-2^m-(n-m)(2^m-(k-1))+m-2-v_2\Big(\frac{k-1}{2}\Big)+(n-1)\notag\\
&=v_2(s(2^n,2^m-(k-1)))+n-1.\label{2.17}
\end{align}

If $l=m$ with $(k-1)-j\ge1$ namely $k-j\ge 2$ or $l\ge m+1$, since $k-1$ is even, then
one can apply Case 1.1 to the case $k-1$ and get that
\begin{align}\label{2.18}
v_2(s(2^n,2^l-j))+n(2^l-j-2^m+k-1)-v_2(s(2^n,2^m-(k-1)))=D_{j,k-1}\ge 4.
\end{align}
So, by (\ref{2.12}) together with (\ref{2.17}) and (\ref{2.18}), we derive that
\begin{align}
\Delta_i&\ge v_2(s(2^n,2^l-j))+n(2^l-j-2^m+k)-v_2(s(2^n,2^m-k))\notag\\
&=v_2(s(2^n,2^l-j))+n(2^l-j-2^m+k-1)-v_2(s(2^n,2^m-(k-1)))+1\notag\\
&=D_{j,k-1}+1\ge 5.\notag
\end{align}

On the other hand, if $l=m$ and $k-j=1$, then $i=2^m-k+1$ is even.
Thus, by (\ref{2.7}) and (\ref{2.17}) we have
\begin{align*}
\Delta_i&=\Delta_{2^m-k+1}=v_2(L_{2^m-k+1})-v_2(s(2^n,2^m-k))\\
&= v_2\big(s(2^n,2^m-k+1)2^n\binom{2^m-k+1}{1}\big)-v_2(s(2^n,2^m-k))\\
&=v_2(s(2^n,2^m-(k-1)))+v_2(2^m-k+1)-v_2(s(2^n,2^m-(k-1)))+1\\
&=v_2(2^m-k+1)+1\ge 2.
\end{align*}

Hence, claim (\ref{2.8}) is proved in this case.

{\sc Case 2.} $j$ is odd. Then $j\ge 3$ and $j-1$ is even,
with $m$ replaced by $l$ and $k$ by $j$ in (\ref{2.17}), one gets that
\begin{align}\label{2.19}
v_2(s(2^n,2^l-j))=v_2(s(2^n,2^l-(j-1)))+n-1.
\end{align}
Applying Case 1 to $j-1$ also tells us that,
if $l=m$ with $k-(j-1)\ge 2$ or $l\ge m+1$ then
\begin{align}\label{2.20}
v_2(s(2^n,2^l-(j-1)))+n(2^l-(j-1)-2^m+k)-v_2(s(2^n,2^m-k))=D_{j-1,k}\ge 4.
\end{align}
Since one has $m\le l\le n$ and $k-j\ge 1$ when $l=m$, then
it is clear that either $l=m$ with $k-(j-1)\ge 2$ or $l\ge m+1$.
Thus, by (\ref{2.12}), (\ref{2.19}) and (\ref{2.20}), one obtains that
\begin{align*}
\Delta_i&\ge v_2(s(2^n,2^l-j))+n(2^l-j-2^m+k)-v_2(s(2^n,2^m-k))\\
&=v_2(s(2^n,2^l-(j-1)))+n(2^l-(j-1)-2^m+k)-v_2(s(2^n,2^m-k))-1\\
&=D_{j-1,k}-1\ge 3
\end{align*}
as (\ref{2.8}) claimed.

This finishes the proof of Lemma \ref{lem4}.
\end{proof}

For integers $n$ and $i$ such that $n\ge 2$ and $1\le i\le 2^{n-1}$,
the following lemma reveals a connection between
$v_2(s(2^n,2i-1))$ and $v_2(s(2^n,2i))$.

\begin{lem}\label{lem5}
Let $n$ be an integer with $n\ge 2$. If (\ref{1.3}) holds
for all integers $m$ and even $k$ with $(m,k)\in T_n$,
then for any integer $i$ with $1\le i\le 2^{n-1}$, we have
\begin{align*}
v_2(s(2^n,2i-1))=v_2(s(2^n,2i))+n-1.
\end{align*}
\end{lem}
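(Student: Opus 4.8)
The plan is to apply Lemma \ref{lem1}, which is available precisely because $2^n$ is even while $2i-1$ is odd, so that their sum is odd. Applying Lemma \ref{lem1} with its $n$ and $k$ replaced by $2^n$ and $2i-1$ respectively gives the expansion
\[
s(2^n,2i-1)=\frac{1}{2}\sum_{j=2i}^{2^n}s(2^n,j)\binom{j-1}{j-2i+1}(2^n)^{j-2i+1}(-1)^{2^n-j}.
\]
The term indexed by $j=2i$ equals $(2i-1)2^{n-1}s(2^n,2i)$, and since $2i-1$ is odd its 2-adic valuation is exactly $v_2(s(2^n,2i))+n-1$, the value we are after. Hence it suffices to prove that every term with $2i+1\le j\le 2^n$ has strictly larger 2-adic valuation; the isosceles triangle principle (see \cite{[K]}) then forces $v_2(s(2^n,2i-1))$ to equal the valuation of the $j=2i$ term, which is the assertion of the lemma.

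To control the remaining terms I need a lower bound for $v_2(s(2^n,j))$ for each $j$ with $2i<j\le 2^n$. For even $j$ this is furnished directly by the hypothesis that (\ref{1.3}) holds for even second index, since $s(2^n,2^n)=1$ and every even $j$ in $[2,2^n-2]$ is uniquely of the form $2^m-k$ with $k$ even and $(m,k)\in T_n$. The odd indices $j$ are the source of the only real difficulty, because the valuation of $s(2^n,j)$ at an odd $j$ is exactly the kind of quantity the lemma is about. To break this circularity I would argue by downward induction on $i$, from $i=2^{n-1}$ to $i=1$. The base case $i=2^{n-1}$ is immediate, for then $2i-1=2^n-1$, the expansion reduces to its single term $j=2^n$, and one reads off $s(2^n,2^n-1)=(2^n-1)2^{n-1}$, so that $v_2(s(2^n,2^n-1))=n-1=v_2(s(2^n,2^n))+n-1$. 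In the inductive step, every odd index $j$ occurring in the sum has the form $j=2i'-1$ with $i+1\le i'\le 2^{n-1}$, hence with $i'>i$, so the induction hypothesis yields $v_2(s(2^n,j))=v_2(s(2^n,2i'))+n-1$; this reduces the valuation of every odd-index term to even-index valuations, which are again governed by (\ref{1.3}).

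With these inputs the proof comes down to verifying, for $2i+1\le j\le 2^n$, the strict inequality
\[
v_2(s(2^n,j))+n(j-2i)+v_2\binom{j-1}{j-2i+1}-v_2(s(2^n,2i))\ge 1,
\]
which says exactly that the $j$-th term beats the leading one. Substituting (\ref{1.3}) for the even indices (and the induction hypothesis for the odd ones), and writing $2i=2^m-k$ and $j=2^l-j'$ with $l\ge m$, turns the left-hand side into an explicit expression in $n,m,l,k,j',v_2(k)$ and $v_2(j')$. This final estimate is the technical heart of the argument and its main obstacle: it requires a case analysis exactly parallel to that used for claim (\ref{2.8}) in Lemma \ref{lem4}, organized according to the parities of $k$ and $j'$ and according to whether $l=m$ or $l>m$. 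The analysis is in fact lighter here than in Lemma \ref{lem4}, since only a margin of $1$ is needed rather than $2$; in each case one may discard the nonnegative binomial contribution and bound the remainder, the constraint $k\le 2^{m-1}+1$ keeping $v_2(k)$ and $v_2(j')$ small enough relative to $m$ to secure the strict inequality. Once this estimate is established the induction closes and the lemma follows.
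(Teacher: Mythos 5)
Your proposal is correct and is essentially the paper's own proof: the paper applies Lemma \ref{lem1} with second argument $2^n-2t-1$ and runs the same downward induction on $i$ (phrased as upward induction on $t=2^{n-1}-i$, with base case $i=2^{n-1}$), uses the induction hypothesis to convert each odd-index term into the corresponding even-index term at a cost of exactly one in valuation, and then bounds the even-index terms by substituting (\ref{1.3}). The final estimate you defer to is precisely the quantity $D_{j,k}\ge 4$ already established in Case 1.1 of the proof of Lemma \ref{lem4} (both $j$ and $k$ even, plus the boundary term $s(2^n,2^n)=1$), which the paper reuses verbatim at this point, so your outline closes along exactly the intended route.
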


\begin{proof}
By setting $i=2^{n-1}-t$, we know that showing the truth of Lemma \ref{lem5}
is equivalent to show that
\begin{align}\label{2.21}
v_2(s(2^n,2^n-2t-1))=v_2(s(2^n,2^n-2t))+n-1
\end{align}
holds for all integers $t$ with $0\le t\le 2^{n-1}-1$.
To prove this, we use induction on the integer $t$.
Clearly, (\ref{2.21}) is true when $t=0$,
since $v_2(s(2^{n},2^{n}-1))=n-1$ and $v_2(s(2^{n},2^{n}))=0$.
Now let $1\le t\le 2^{n-1}-1$. Assume that ({\ref{2.21}})
is true for all integers $e$ with $0\le e\le t-1$. In the following
we prove that ({\ref{2.21}}) holds for the integer $t$.

Replacing $n$ by $2^n$ and $k$ by $2^n-2t-1$ in Lemma \ref{lem1}, we get that
\begin{align}
s(2^{n},2^n-2t-1)&=\frac{1}{2}\sum_{i=2^n-2t}^{2^{n}}s(2^{n},i)
\binom{i-1}{i-2^n+2t+1}2^{n(i-2^n+2t+1)}(-1)^{2^{n}-i}\notag\\
& = (2^n-2t-1)2^{n-1} s(2^{n},2^n-2t)+2^{n-1}L,\label{2.22}
\end{align}
where $L=\sum_{i=2^n-2t+1}^{2^{n}}L_i$
and
\begin{align}\label{2.23}
L_i=s(2^{n},i)\binom{i-1}{i-2^n+2t+1}2^{n(i-2^n+2t)}(-1)^{2^{n}-i}.
\end{align}

For any integer $i$ with $2^n-2t+1\le i\le 2^n$,
one can always write $i=2^n-2t+2r-1$
or $i=2^n-2t+2r$ for an integer $r$ with $1\le r\le t$.
Then $0\le t-r\le t-1$. So by the induction assumption, one derives that
$$v_2(s(2^n,2^n-2t+2r-1))=v_2(s(2^n,2^n-2(t-r)-1))=v_2(s(2^n,2^n-2(t-r)))+n-1,$$
which implies immediately that
\begin{align}\label{2.24}
v_2(s(2^{n},2^n-2t+2r-1)2^{n(2r-1)})=v_2(s(2^{n},2^n-2t+2r)2^{2nr})-1.
\end{align}
Hence for any integer $r$ with $1\le r\le t$, if it holds that
\begin{align}\label{2.25}
v_2(s(2^{n},2^n-2t+2r)2^{2nr}) \ge v_2(s(2^{n},2^n-2t))+4,
\end{align}
then by (\ref{2.23}) to (\ref{2.25}), one can deduce
\begin{align*}
v_2(L)&=v_2\Big(\sum_{i=2^n-2t+1}^{2^n}L_i\Big)
\ge \min_{2^n-2t+1\le i\le 2^n}\{v_2(L_i)\}\\
&\ge \min_{2^n-2t+1\le i\le 2^n}\{v_2(s(2^n,i)2^{n(i-2^n+2t)})\}
\ge v_2(s(2^{n},2^n-2t))+3.
\end{align*}
It infers that
\begin{align*}
v_2(2^{n-1}L)>v_2((2^n-2t-1)2^{n-1}s(2^{n},2^n-2t)).
\end{align*}
Thus by (\ref{2.22}) and using the isosceles triangle principle,
one arrives at
$$v_2(s(2^n,2^n-2t-1))=v_2((2^n-2t-1)2^{n-1}s(2^{n},2^n-2t))
=v_2(s(2^n,2^n-2t))+n-1$$
as desired. So to finish the proof of Lemma \ref{lem5}, it remains
to show that (\ref{2.25}) is true. This will be done in what follows.

Since $2\le 2^n-2t\le 2^n-2$,
one can write $2^n-2t=2^m-k$ for some $(m,k)\in T_n$ with $k$ being even.
Then by (\ref{1.3}), one gets that
\begin{align}\label{2.26}
v_2(s(2^n,2^m-k)) = 2^n-2^m-(n-m)(2^m-k)+m-1-v_2(k).
\end{align}

Note that $2^n-2t+2\le 2^n-2t+2r\le 2^n$ since $1\le r\le t$.
If $2^n-2t+2r=2^n$, then $s(2^n,2^n-2t+2r)=s(2^n,2^n)=1$
and $2r=2t=2^n-2^m+k$.
Thus by (\ref{2.26}), together with $2\le m\le n$,
$k\ge 2$ and $k$ being even, one deduces that
\begin{align*}
&v_2(s(2^n,2^n-2t+2r)2^{2nr})-v_2(s(2^n,2^n-2t))=n(2^n-2^m+k)-v_2(s(2^n,2^m-k))\\
&=(n-1)2^n-(m-1)2^m+m(k-1)+v_2(k)+1\ge m+v_2(k)+1\ge 4.
\end{align*}
Hence (\ref{2.25}) is true in this case.

If $2^n-2t+2\le 2^n-2t+2r\le 2^n-2$, then as in the proof of Lemma \ref{lem4},
one may let $2^n-2t+2r:=2^l-j$ for some $(l,j)\in T_n$ with $j$ being even.
However, $2^n-2t=2^m-k$. Hence $l\ge m$ and $2r=2^l-j-(2^n-2t)=2^l-j-2^m+k$,
also notice that $2r=k-j\ge 2$ if $l=m$
and both of $j$ and $k$ are even. Then as the derivation of (\ref{2.12}),
(\ref{2.15}) and (\ref{2.16}), one can obtain that
\begin{align*}
&v_2(s(2^n,2^n-2t+2r)2^{2nr})-v_2(s(2^n,2^n-2t))\\
&=v_2(s(2^n,2^l-j)2^{n(2^l-j-2^m+k)})-v_2(s(2^n,2^m-k))\\
&=v_2(s(2^n,2^l-j))+n(2^l-j-2^m+k)-v_2(s(2^n,2^m-k))=D_{j,k}\ge 4
\end{align*}
as (\ref{2.25}) expects. So (\ref{2.25}) is proved.

This completes the proof of Lemma \ref{lem5}.
\end{proof}

\section{Proof of Theorem \ref{thm1}}

In this section, we present the proof of Theorem \ref{thm1}.\\

\noindent{\it Proof of Theorem \ref{thm1}.} We prove Theorem \ref{thm1}
by induction on $n \geq 2$.

First, let $n=2$. Then $m=2$ and $k\in \{2, 3\}$.
Since $v_2(s(2^2,2^2-2))=v_2(11)=0$ and $v_2(s(2^2,2^2-3))=v_2(6)=1$,
(1.1) is true when $n=2$. So Theorem \ref{thm1} holds for the case $n=2$.
Assume that Theorem \ref{thm1} is true for the $n$ case with $n\ge 2$.
Then (\ref{1.3}) holds for all $(m,k)\in T_{n}$.

Now we prove Theorem \ref{thm1} for the $n+1$ case. Namely, we have to
show that for all $(m,k)\in T_{n+1}$, one has
\begin{align}\label{3.1}
v_2(s(2^{n+1},2^m-k))=2^{n+1}-2^m-(n-m+1)\Big(2^m-2\Big\lfloor\frac{k}{2}\Big\rfloor\Big)
+ m-2-v_2\Big(\Big\lfloor\frac{k}{2}\Big\rfloor\Big)+n\epsilon_k,
\end{align}
where $\epsilon_k=0$ if $k$ is even, and $\epsilon_k=1$ if $k$ is odd.
This will be done in what follows.

Let $(m,k)\in T_{n+1}$. If we can show that (\ref{3.1})
holds for all even integers $k$ with $2\le k\le 2^{m-1}$, then
(\ref{3.1}) is true for all odd integers $k$ with $3\le k\le 2^{m-1}+1$.
Actually, let $k$ be an odd integer.
Then $k-1$ is even and $2\le k-1\le 2^{m-1}$.
So by (3.1), one obtains that
\begin{align}
&v_2(s(2^{n+1},2^m-(k-1)))\notag\\
&=2^{n+1}-2^m-(n-m+1)\Big(2^m-2\Big\lfloor\frac{k-1}{2}\Big\rfloor\Big)
+ m-2-v_2\Big(\Big\lfloor\frac{k-1}{2}\Big\rfloor\Big)\notag\\
&=2^{n+1}-2^m-(n-m+1)(2^m-(k-1))+ m-2-v_2\Big(\frac{k-1}{2}\Big)\notag\\
&=2^{n+1}-2^m-(n-m+1)\Big(2^m-2\Big\lfloor\frac{k}{2}\Big\rfloor\Big)
+ m-2-v_2\Big(\Big\lfloor\frac{k}{2}\Big\rfloor\Big).\label{3.2}
\end{align}
But Lemma \ref{lem5} tells us that
\begin{align}
v_2(s(2^{n+1},2^m-k))=v_2(s(2^{n+1},2^m-k+1))+n.\label{3.3}
\end{align}
It then follows from (\ref{3.2}) and (\ref{3.3}) that
\begin{align*}
v_2(s(2^{n+1},2^m-k))=2^{n+1}-2^m-(n-m+1)\Big(2^m-2\Big\lfloor\frac{k}{2}\Big\rfloor\Big)
+ m-2-v_2\Big(\Big\lfloor\frac{k}{2}\Big\rfloor\Big)+n.
\end{align*}
Therefore (3.1) is true for any odd integer $k$ with $3 \leq k \leq 2^{m-1}+1$.
So in what follows, we need just to
show that (3.1) is true for all even integers $k$ with $2 \leq k \leq 2^{m-1}$.
It will be done in the remaining part of the proof.

In what follows, let $(m,k)\in T_{n+1}$ with $k$ being even, and
we will show the truth of (\ref{3.1}). Since $k$ is even, showing (\ref{3.1})
is equivalent to show the following identity:
\begin{align}\label{3.4}
v_2(s(2^{n+1},2^m-k))=W_{m,k}:=2^{n+1}-2^m-(n-m+1)(2^m-k)+m-1-v_2(k).
\end{align}

Let $m=2$, then $k=2$ and so by (\ref{1.1}), we have
$$
s(2^{n+1},2^m-k)=s(2^{n+1},2)=(2^{n+1}-1)!H(2^{n+1}-1, 1).
$$
Applying the well-known formula $v_2(H(n, 1))=-\lfloor \log_2n\rfloor$
with $n$ replaced by $2^{n+1}-1$, one then gets that
\begin{align*}
&v_2(s(2^{n+1},2^m-k))=v_2(s(2^{n+1},2))=v_2((2^{n+1}-1)!)+v_2(H(2^{n+1}-1, 1))\\
&=2^{n+1}-1-(n+1)-\lfloor\log_2(2^{n+1}-1)\rfloor
=2^{n+1}-2n-2=W_{2,2}=W_{m,k},
\end{align*}
which implies the truth of (\ref{3.4}) when $m=2$.

Now assume that $3\le m\le n+1$. Replacing both of $m$ and $n$ by $2^n$,
$k$ by $2^m-k$ in Lemma \ref{lem2},
one obtains that
$$
s(2^{n+1},2^m-k)=\sum_{i=1}^{2^m-k}g_k(i),
$$
where
$$g_k(i):=s(2^n,i)s_{2^n}(2^n,2^m-k-i).$$
Let
\begin{align*}
S_1:=\sum_{i\in A}g_k(i),\  S_2:=\sum_{i\in B}g_k(i),\  S_3:=\sum_{i\in C}g_k(i),
\end{align*}
where the sets $A, B, C$ are defined by
\begin{align*}
A:=\left\{\begin{array}
{ll}\{2^{m-1},2^{m-1}-k\} & {\rm if}\ 2 \leq k \leq 2^{m-2},\\
\{2^{m-2},2^{m}-k-2^{m-2}\} & {\rm if}\ m\ge 4\ {\rm and}\ 2^{m-2}+2 \leq k \leq 2^{m-1}-2,\\
\{2^{m-2}\} & {\rm if}\ k=2^{m-1}
\end{array}\right.
\end{align*}
and
\begin{align*}
B:=([2,2^m-k]\cap 2\mathbb{Z})\setminus A, \ C:=[1, 2^m-k]\cap (1+2\mathbb{Z}).
\end{align*}
Then
\begin{align}
s(2^{n+1},2^m-k)=S_1+S_2+S_3.\notag
\end{align}
We claim that the following statements hold:\\
(I). $v_2(S_1)=W_{m,k}$.\\
(II). $v_2(S_2)\ge W_{m,k}+1$.\\
(III). $v_2(S_3)\ge W_{m,k}+1$.\\
It then follows from the isosceles triangle principle and
the claims (I) to (III) that
$$
v_2(s(2^{n+1},2^m-k))=v_2(S_1+S_2+S_3)=v_2(S_1)=W_{m,k}
$$
as (\ref{3.4}) required. So to finish the proof of Theorem 1.1,
it remains to show the truth of the claims (I) to (III).

{\it Proof of claim (I).}
At first, we prove claim (I): $v_2(S_1)=W_{m,k}$.

If $2\le k\le 2^{m-2}$, then $A=\{2^{m-1},2^{m-1}-k\}$ and
\begin{align}
S_1&=g_k(2^{m-1})+g_k(2^{m-1}-k)\notag\\
&=s(2^n,2^{m-1})s_{2^n}(2^n,2^{m-1}-k)+s(2^n,2^{m-1}-k)s_{2^n}(2^n,2^{m-1}).\notag
\end{align}
By Lemma 2.3, one can write
$s_{2^n}(2^n,2^{m-1}-k)=s(2^n,2^{m-1}-k)+L_1$
and $s_{2^n}(2^n,2^{m-1})=s(2^n,2^{m-1})+L_2$,
where
$$
L_1=\sum_{i=2^{m-1}-k+1}^{2^n}s(2^n,i)2^{n(i-2^{m-1}+k)}\binom{i}{i-2^{m-1}+k},
$$
and
$$
L_2=0\ {\rm if}\ m=n+1,\  L_2=\sum_{i=2^{m-1}+1}^{2^n}s(2^n,i)2^{n(i-2^{m-1})}\binom{i}{i-2^{m-1}}\ {\rm if}\ 3\le m\le n .
$$
Then
\begin{align}
g_k(2^{m-1})=s(2^n,2^{m-1})s(2^n,2^{m-1}-k)+s(2^n,2^{m-1})L_1,\label{3.5}
\end{align}
\begin{align}
g_k(2^{m-1}-k)=s(2^n,2^{m-1})s(2^n,2^{m-1}-k)+s(2^n,2^{m-1}-k)L_2 \label{3.6}
\end{align}
and
\begin{align}
S_1=2s(2^n,2^{m-1})s(2^n,2^{m-1}-k)+s(2^n,2^{m-1})L_1+s(2^n,2^{m-1}-k)L_2.\label{3.7}
\end{align}
Since $3\le m\le n+1$ and $2\le k\le 2^{m-2}$,
one has $2\le 2^{m-1}-k\le 2^{n}-2$ and $4\le 2^{m-1}\le 2^n$.
It then follows from the induction assumption and replacing $t$ by
$2^{m-1}-k$ and $2^{m-1}$ in Lemma \ref{lem4} that
\begin{align}
v_2(L_1)=v_2(s_{2^n}(2^n,2^{m-1}-k)-s(2^n,2^{m-1}-k))\ge v_2(s(2^n,2^{m-1}-k))+2 \label{3.8}
\end{align}
and
\begin{align}
v_2(L_2)=v_2(s_{2^n}(2^n,2^{m-1})-s(2^n,2^{m-1}))\ge v_2(s(2^n,2^{m-1}))+2,\label{3.9}
\end{align}
respectively. Furthermore, by the induction assumption, we have
\begin{align}
v_2(s(2^n,2^{m-1}))=v_2(s(2^n,2^m-2^{m-1}))=2^n-2^{m}-(n-m)2^{m-1}\label{3.10}
\end{align}
and
\begin{align}
v_2(s(2^n,2^{m-1}-k))&=2^n-2^{m-1}-(n-m+1)(2^{m-1}-k)+m-2-v_2(k).\label{3.11}
\end{align}
By (\ref{3.5}), (\ref{3.8}), (\ref{3.10}) and (\ref{3.11}) and using the isosceles triangle principle,
one then deduces that
\begin{align}
v_2(g_k(2^{m-1}))&=v_2(s(2^n,2^{m-1}))+v_2(s(2^n,2^{m-1}-k))\notag\\
&=2^{n+1}-2^m-(n-m+1)(2^{m}-k)+m-2-v_2(k)=W_{m,k}-1.\label{3.12}
\end{align}
Likewise, by using  (\ref{3.6}), (\ref{3.9}) to (\ref{3.11}) and the isosceles triangle principle,
we get that
\begin{align}
v_2(g_k(2^{m-1}-k))&=v_2(s(2^n,2^{m-1}))+v_2(s(2^n,2^{m-1}-k))=W_{m,k}-1.\label{3.13}
\end{align}
Moreover, (\ref{3.8}) together with (\ref{3.9}) infers that
\begin{align}
 v_2(s(2^n,2^{m-1})L_1+s(2^n,2^{m-1}-k)L_2)>v_2(2s(2^n,2^{m-1})s(2^n,2^{m-1}-k)).\label{3.14}
\end{align}
Hence by (\ref{3.7}) and (\ref{3.14}) and again using the isosceles triangle principle, one arrives at
\begin{align}
v_2(S_1)&=v_2(2s(2^n,2^{m-1})s(2^n,2^{m-1}-k))\notag\\
&=v_2(s(2^n,2^{m-1}))+v_2(s(2^n,2^{m-1}-k))+1=W_{m,k}\notag
\end{align}
as desired. So claim (I) is true when $2\le k\le 2^{m-2}$.

If $2^{m-2}+2 \leq k \leq 2^{m-1}-2$ with $4\le m\le n+1$,
then $A=\{2^{m-2},2^{m}-k-2^{m-2}\}$, and so
\begin{align}
S_1&=g_k(2^{m-2})+g_k(2^{m}-k-2^{m-2})\notag\\
&=s(2^n,2^{m-2})s_{2^n}(2^n,2^{m}-k-2^{m-2})+s(2^n,2^{m}-k-2^{m-2})s_{2^n}(2^n,2^{m-2}).\label{3.15}
\end{align}
By Lemma 2.3, we have
\begin{align}
s_{2^n}(2^n,2^{m}-k-2^{m-2})&=s(2^n,2^{m}-k-2^{m-2})+L_1^{'}\label{3.16}
\end{align}
and
\begin{align}
s_{2^n}(2^n,2^{m-2})&=s(2^n,2^{m-2})+L_2^{'},\label{3.17}
\end{align}
where
$$
L_1^{'}=\sum_{i=2^{m}-k-2^{m-2}+1}^{2^n}
s(2^n,i)2^{n(i-2^{m}+k+2^{m-2})}\binom{i}{i-2^{m}+k+2^{m-2}}
$$
and
$$
L_2^{'}=\sum_{i=2^{m-2}+1}^{2^n}s(2^n,i)2^{n(i-2^{m-2})}\binom{i}{i-2^{m-2}}.
$$
Since $2\le 2^{m}-k-2^{m-2}=2^{m-1}-(k-2^{m-2})\le 2^n-2$ and $2\le 2^{m-2}\le 2^{n-1}$,
in Lemma \ref{lem4}, replacing $t$ by $2^{m}-k-2^{m-2}$ and $2^{m-2}$, respectively, one gets that
\begin{align}
v_2(s_{2^n}(2^n,2^{m}-k-2^{m-2}))= v_2(s(2^n,2^{m}-k-2^{m-2})), \label{3.18}
\end{align}
\begin{align}
v_2(L_1^{'})&=v_2(s_{2^n}(2^n,2^{m}-k-2^{m-2})-s(2^n,2^{m}-k-2^{m-2}))\notag\\
&\ge v_2(s(2^n,2^{m}-k-2^{m-2}))+2, \label{3.19}
\end{align}
and
\begin{align}
v_2(s_{2^n}(2^n,2^{m-2}))= v_2(s(2^n,2^{m-2})), \label{3.20}
\end{align}
\begin{align}
v_2(L_2^{'})=v_2(s_{2^n}(2^n,2^{m-2})-s(2^n,2^{m-2}))\ge v_2(s(2^n,2^{m-2}))+2, \label{3.21}
\end{align}
respectively. It then follows from (\ref{3.15}) to (\ref{3.21}) and the isosceles triangle principle that
\begin{align}
&v_2(S_1)\notag\\
&=v_2(2s(2^n,2^{m-2})s(2^n,2^{m}-k-2^{m-2})
+s(2^n,2^{m-2})L_1^{'}+s(2^n,2^{m}-k-2^{m-2})L_2^{'})\notag\\
&=v_2(2s(2^n,2^{m-2})s(2^n,2^{m}-k-2^{m-2}))\notag\\
&=v_2(s(2^n,2^{m-2}))+v_2(s(2^n,2^{m}-k-2^{m-2}))+1.\label{3.22}
\end{align}
Also, by (\ref{3.18}), one has
\begin{align}
v_2(g_k(2^{m-2}))&=v_2(s(2^n,2^{m-2})s_{2^n}(2^n,2^{m}-k-2^{m-2}))\notag\\
&=v_2(s(2^n,2^{m-2}))+v_2(s(2^n,2^{m}-k-2^{m-2})),\label{3.23}
\end{align}
and by (\ref{3.20}), we have
\begin{align}
v_2(g_k(2^{m}-k-2^{m-2}))&=v_2(s(2^n,2^{m}-k-2^{m-2})s_{2^n}(2^n,2^{m-2}))\notag\\
&=v_2(s(2^n,2^{m-2}))+v_2(s(2^n,2^{m}-k-2^{m-2})).\label{3.24}
\end{align}
Noticing that $2\le k-2^{m-2}\le 2^{m-2}-2$, then the inductive hypothesis tells that
\begin{align}\label{3.25}
v_2(s(2^n,2^{m-2}))=v_2(s(2^n,2^{m-1}-2^{m-2}))=2^n-2^{m-1}-(n-m+1)2^{m-2}
\end{align}
and
\begin{align}
&v_2(s(2^n,2^{m}-k-2^{m-2}))=v_2(s(2^n,2^{m-1}-(k-2^{m-2})))\notag\\
&=2^n-2^{m-1}-(n-m+1)(2^{m-1}-(k-2^{m-2}))+m-2-v_2(k).\label{3.26}
\end{align}
So by (\ref{3.22}) to (\ref{3.26}) one obtains that
\begin{align}
v_2(g_k(2^{m-2}))&=v_2(g_k(2^{m}-k-2^{m-2}))\notag\\
&=2^{n+1}-2^m-(n-m+1)(2^{m}-k)+m-2-v_2(k)=W_{m,k}-1\label{3.27}
\end{align}
and
\begin{align*}
v_2(S_1)&=2^{n+1}-2^m-(n-m+1)(2^{m}-k)+m-2-v_2(k)+1=W_{m,k}
\end{align*}
as expected. Thus claim (I) holds when $2^{m-2}+2 \leq k \leq 2^{m-1}-2$
with $4\le m\le n+1$.

If $k=2^{m-1}$, then $A=\{2^{m-2}\}$ and
$S_1=g_{2^{m-1}}(2^{m-2})=s(2^n,2^{m-2})s_{2^n}(2^n,2^{m-2})$.
By (\ref{3.17}), (\ref{3.21}), (\ref{3.25}) and the isosceles triangle principle,
also noticing that $W_{m,k}=W_{m,2^{m-1}}=2^{n+1}-2^m-(n-m+1)2^{m-1}$,
we derive that
\begin{align}
v_2(S_1)&=v_2(g_{2^{m-1}}(2^{m-2}))=v_2(s(2^n,2^{m-2})^2+s(2^n,2^{m-2})L_2^{'})\notag\\
&=2v_2(s(2^n,2^{m-2}))=2^{n+1}-2^{m}-(n-m+1)2^{m-1}=W_{m,k}\label{3.28}
\end{align}
as one desires. This concludes the proof of claim (I).

The identities (\ref{3.12}), (\ref{3.13}), (\ref{3.27}) and (\ref{3.28}) also
tell us that
\begin{align}
v_2(g_k(i))\ge W_{m,k}-1\label{3.29}
\end{align}
holds for all integers $i\in A$.

{\it Proof of claim (II).}
Let $i\in B$. If $i > 2^n$ or $2^m-k-i > 2^n$,
then it is clear that $g_k(i)=s(2^n,i)s_{2^n}(2^n,2^m-k-i)=0$.
Therefore $S_2$ can be rewritten as
\begin{align*}
S_2=\sum_{i\in \widetilde{B}}g_k(i),
\end{align*}
where $\widetilde{B}=B\cap [2^m-k-2^n,2^n]$.
Note that $B$ is nonempty (since $2^m-k\in B$). So if $\widetilde{B}$ is empty then
$S_2=0$, thus claim (II) follows.

Now assume that $\widetilde{B}$ is nonempty.
Then one can divide $\widetilde{B}$ into the disjoint union:
$$\widetilde{B}=B_1\cup B_2\cup B_3\cup B_4,$$
where
\begin{align*}
B_1:=\Big[2,2^{m-1}-\frac{k}{2}\Big)\cap\widetilde{B},\ \
B_2:= (2^{m-1}-\frac{k}{2},2^m-k)\cap\widetilde{B},
\end{align*}
$$B_3:=\Big\{2^{m-1}-\frac{k}{2}\Big\}\cap\widetilde{B}, \ \ B_4:=\{2^m-k\}\cap\widetilde{B}.$$
At least one of $B_1, B_2, B_3$ and $B_4$ is nonempty. So
\begin{align}
S_2=\sum_{j=1}^4\sum_{i\in B_j}g_k(i).\label{3.30}
\end{align}

First, we handle $B_3$.
Suppose that $B_3$ is nonempty. Then for $i\in B_3$, we show that
\begin{align}
v_2(g_k(i))\ge W_{m, k}+1.\label{3.31}
\end{align}
Since $i\in B_3$, one has $i=2^{m-1}-\frac{k}{2}$ and so $i=2^m-k-i$.
We must have $k\ne 2^{m-1}$. Otherwise, $k=2^{m-1}$ implies that
$i=2^{m-2}$ which contradicts with the definition of $B$ when $k=2^{m-1}$.
This concludes that $2\le k\le 2^{m-1}-2$. It infers that $v_2(k)\le m-2$.
Since $2\le 2^{m-1}-\frac{k}{2}\le 2^n-2$ and noticing that $\frac{k}{2}$ is even,
by the inductive hypothesis and Lemma \ref{lem4}, one gets that
\begin{align}
v_2(s_{2^n}(2^n, 2^m-k-i))=v_2\Big(s_{2^n}\Big(2^n,2^{m-1}-\frac{k}{2}\Big)\Big)
=v_2\Big(s\Big(2^n,2^{m-1}-\frac{k}{2}\Big)\Big) \label{3.32}
\end{align}
and
\begin{align}
v_2(s(2^n, i))&=v_2\Big(s\Big(2^n,2^{m-1}-\frac{k}{2}\Big)\Big)\notag\\
&=2^n-2^{m-1}-(n-m+1)\Big(2^{m-1}-\frac{k}{2}\Big)+m-2-v_2\Big(\frac{k}{2}\Big).\label{3.33}
\end{align}
Hence by (\ref{3.32}) and (\ref{3.33}), one obtains that
\begin{align}
v_2(g_k(i))-W_{m,k}&=v_2(s(2^n,i)s_{2^n}(2^n,2^m-k-i))-W_{m,k}\notag\\
&=2v_2\Big(s\Big(2^n,2^{m-1}-\frac{k}{2}\Big)\Big)-W_{m,k}\notag\\
&=m-1-v_2(k)\ge 1.\label{3.34}
\end{align}
Then (\ref{3.31}) follows immediately. Thus (\ref{3.31}) holds when $i\in B_3$.

Now we show that if $B_4$ is nonempty, then
(\ref{3.31}) also holds when $i\in B_4$. Let $i\in B_4$. Then $i=2^m-k$ and
\begin{align}\label{3.35}
v_2(s_{2^n}(2^n, 2^m-k-i))=v_2(s_{2^n}(2^n,0))=v_2\Big(\frac{(2^{n+1}-1)!}{(2^n-1)!}\Big)=2^n-1.
\end{align}
If $i=2^m-k=2^n$, then by $3\le m\le n+1$ and $2\le k\le 2^{m-1}$,
we get that $m=n+1$ and $k=2^n$. Since $s(2^n,i)=s(2^n,2^n)=1$, $n\ge 2$
and $W_{m, k}=W_{n+1, 2^n}=0$, one has
\begin{align}
v_2(g_k(i))=v_2(s(2^n,i)s_{2^n}(2^n, 2^m-k-i))=2^n-1\ge 3=W_{m,k}+3\label{3.36}
\end{align}
as (\ref{3.31}) expected. If $2\le 2^m-k\le 2^n-2$, then replacing $m-1$ by $m$
and $\frac{k}{2}$ by $k$ in (\ref{3.33}) together with (\ref{3.35}), one arrives at
\begin{align}
v_2(g_k(i))-W_{m,k}&=v_2(s(2^{n},i)s_{2^n}(2^n,2^m-k-i))-W_{m,k} = 2^m-k-1\ge 1.\label{3.37}
\end{align}
Hence (\ref{3.31}) is true when $i\in B_4$.

Consequently, assume that $B_1$ is nonempty. Then $B_2$ is nonempty too.
Actually, since $B_1$ is nonempty, one picks $i\in B_1$. Then it is
easy to see that $2^m-k-i\in B_2$. So $B_2$ is nonempty.
Furthermore, it is clear that $\# B_1=\#B_2$. We define a map
$$\tau: B_1\rightarrow B_2$$
by $\tau(i):=2^m-k-i$ for any $i\in B_1$.
Evidently, $\tau(2^m-k-j)=j$ for any $j\in B_2$. That is, $\tau$
is injective. So $\tau$ is a bijective map from $B_1$ to $B_2$.
It then follows that
\begin{align}
\sum_{i\in B_1}g_k(i)+\sum_{i\in B_2}g_k(i)=\sum_{i\in B_1}g_k(i)+\sum_{i\in B_1}g_k(\tau(i))
=\sum_{i\in B_1}(g_k(i)+g_k(2^m-k-i)).\label{3.38}
\end{align}

On the other hand, for any $i\in B_1$ one has $2\le i\le 2^{m-1}-\frac{k}{2}-1\le 2^n-2$ and
$2^m-k-i\in B_2$, it infers that $4\le 2^{m-1}-\frac{k}{2}+1\le2^m-k-i\le 2^n$.
Thus by the induction assumption and Lemma \ref{lem4}, one deduces that
\begin{align*}
v_2(g_k(i))=v_2(s(2^n,i))+v_2(s_{2^n}(2^n,2^m-k-i))=v_2(s(2^n,i))+v_2(s(2^n,2^m-k-i))
\end{align*}
and
\begin{align*}
v_2(g_k(2^m-k-i))&=v_2(s(2^n,2^m-k-i))+v_2(s_{2^n}(2^n,i))\\
&=v_2(s(2^n,2^m-k-i))+v_2(s(2^n,i)).
\end{align*}
Hence
\begin{align}
v_2(g_k(i))=v_2(g_k(2^m-k-i)).\label{3.39}
\end{align}
So for any $i\in B_1$, if we can show that
\begin{align}
v_2(g_k(i))\ge W_{m,k},\label{3.40}
\end{align}
then (\ref{3.39}) together with (\ref{3.40}) implies that
\begin{align}
v_2(g_k(i)+g_k(2^m-k-i))\ge v_2(g_k(i))+1\ge W_{m,k}+1.\label{3.41}
\end{align}
Thus it follows from (\ref{3.30}), (\ref{3.31}), (\ref{3.38}) and (\ref{3.41}) that
\begin{align}
v_2(S_2)&=v_2(\sum_{i\in B_1}(g_k(i)+g_k(2^m-k-i))+\sum_{i\in B_3}g_k(i)+\sum_{i\in B_4}g_k(i))\notag\\
&\ge \min\{\min_{i\in B_1}\{v_2(g_k(i)+g_k(2^m-k-i))\}, W_{m,k}+1\}\notag\\
&\ge  W_{m,k}+1\label{3.42}
\end{align}
as claim (II) desired. Also, by (\ref{3.31}), (\ref{3.39}) and (\ref{3.40}), one can conclude that
$v_2(g_k(i))\ge W_{m,k}$ is still true for all integers $i\in \widetilde{B}$.
So to finish the proof of claim (II), it remains to show that (\ref{3.40}) is true
for any $i\in B_1$. This will be done in what follows.

Let $i\in B_1$. Then we can write $i=2^{l_1}-j_1$ and $2^m-k-i=2^{l_2}-j_2$,
where $(l_1,j_1)\in T_{m-1}$ and $(l_2,j_2)\in T_{m}$
with both of $j_1$ and $j_2$ being even.
Notice that $2^m-k-i\in B_2$, and so $l_2\ge m-1\ge l_1$.
Hence $l_2\in\{m-1,m\}$. Also, we have
\begin{align}
2^{m}-k=2^{l_1}-j_1+2^{l_2}-j_2.\label{3.43}
\end{align}
Since $2\le k\le 2^{m-1}$, $2\le j_1\le 2^{l_1-1}$ and $2\le j_2\le 2^{l_2-1}$,
by (\ref{3.43}) one then derives that
\begin{align}
v_2(k)-v_2(j_1)-v_2(j_2)&=v_2(2^{m}-k)-v_2(j_1)-v_2(j_2)\notag\\
&=v_2(2^{l_1}-j_1+2^{l_2}-j_2)-v_2(j_1)-v_2(j_2)\notag\\
&\ge \min\{v_2(2^{l_1}-j_1),v_2(2^{l_2}-j_2)\}-v_2(j_1)-v_2(j_2)\notag\\
&= \min\{v_2(j_1),v_2(j_2)\}-v_2(j_1)-v_2(j_2)\notag\\
&= \min\{-v_2(j_1),-v_2(j_2)\}\notag\\
&=-\max\{v_2(j_1), v_2(j_2)\}.\label{3.44}
\end{align}
Moreover, it follows from $2\le l_1\le m-1\le n$ and the induction assumption that
\begin{align}
v_2(s(2^n,i))&=v_2(s(2^n,2^{l_1}-j_1)= 2^n-2^{l_1}-(n-l_1)(2^{l_1}-j_1)+l_1-1-v_2(j_1).\label{3.45}
\end{align}
Consider the following two cases.

{\sc Case 1.} $l_2=n+1$. Then one gets that $l_2=m=n+1$ since $l_2\le m\le n+1$.
But $2^m-k-i\in B_2$ tells us that $2^m-k-i=2^{l_2}-j_2=2^{n+1}-j_2\le 2^n$.
So by $2\le j_2\le 2^{l_2-1}=2^n$, one derives that $j_2=2^n$.
Thus $2^m-k-i=2^{n+1}-2^n=2^n$ and so $i=2^m-k-2^n=2^n-k=2^{m-1}-k$.
It implies that $2+2^{m-2}\le k\le 2^{m-1}-2$
since $i>0$ and $i\ne 2^{m-1}-k$ when $2\le k\le 2^{m-2}$.
Hence $2\le i\le 2^{m-2}-2=2^{n-1}-2$, which infers that $l_1\le n-1$.
Then by (\ref{3.45}) together with the fact that $s(2^n,2^n)=1$
and noticing that $W_{m,k}=W_{n+1,k}=n-v_2(k)$, we deduce that
\begin{align}
v_2(g_k(i))-W_{m,k}&=v_2(s(2^n,i))+v_2(s(2^n,2^m-k-i))-W_{m,k}=v_2(s(2^n,i))-W_{m,k}\notag\\
&=2^n+(l_1-n-1)2^{l_1}+(j_1-1)(n-l_1)-1+v_2(k)-v_2(j_1)\notag\\
&\ge 2^n+(l_1-n-1)2^{l_1}+n-l_1-1\notag\\
&= 2^n+(l_1-n-1)(2^{l_1}-1)-2:= D_{l_1}\label{3.46}
\end{align}
since $2\le j_1\le 2^{l_1-1}\le 2^{n-2}$ and $i=2^{l_1}-j_1=2^n-k$
implying that $v_2(k)=v_2(j_1)$.

Now we show that $D_{l_1}\ge 0$, then (\ref{3.40}) follows immediately.
For this purpose, we introduce an auxiliary function $h_t(x):=(x-t)(2^x-1)$. Then
$h_t'(x)=2^x(1-(t-x)\ln2)-1$, and so $h_t'(x)<0$ for $x\le t-2$.
Thus $h_t(x)$ is decreasing when $x\le t-2$. It infers that
$$
D_{l_1}=2^n+h_{n+1}(l_1)-2\ge 2^n+h_{n+1}(n-1)-2=0
$$
since $2\le l_1\le n-1$. Hence (\ref{3.40}) is proved in this case.

{\sc Case 2.} $2\le l_2\le n$. Since $j_2$ is even, by the induction assumption, one has
\begin{align}
v_2(s(2^n,2^m-k-i))&=v_2(s(2^n,2^{l_2}-j_2))\notag\\
&= 2^n-2^{l_2}-(n-l_2)(2^{l_2}-j_2)+l_2-1-v_2(j_2).\label{3.47}
\end{align}
By (\ref{3.43}), (\ref{3.45}) and (\ref{3.47}), one obtains that
\begin{align}
v_2(g_k(i))-W_{m,k}&=v_2(s(2^n,i))+v_2(s(2^n,2^m-k-i))-W_{m,k}\notag\\
&=2^m+(l_1-m)2^{l_1}+(l_2-m)2^{l_2}+j_1(m-l_1-1)+j_2(m-l_2-1)\notag\\
&\ \ \ \ +l_1+l_2-m-1+v_2(k)-v_2(j_1)-v_2(j_2):=D_{l_1,l_2}.\label{3.48}
\end{align}
In what follows, we show that $D_{l_1,l_2}\ge 0$, which concludes the proof of
(\ref{3.40}). Since $l_2\in\{m-1,m\}$,
we divide this into following two subcases.

{\sc Case 2.1.} $l_2=m-1$. Then one has $i=2^{l_1}-j_1$ with $2\le j_1\le 2^{l_1-1}\le 2^{m-2}$
and $2^m-k-i=2^{m-1}-j_2$ with $2\le j_2\le 2^{m-2}$.
Since $2\le l_1\le m-1$, if we can show that
\begin{align}
\max\{v_2(j_1), v_2(j_2)\}\le m-3,\label{3.49}
\end{align}
then by (\ref{3.49}) together with (\ref{3.48}) and (\ref{3.44}), one can deduce that
\begin{align}
D_{l_1,l_2}&=D_{l_1,m-1}\notag\\
&=2^{m-1}+(l_1-m)2^{l_1}+j_1(m-l_1-1)+l_1-2+v_2(k)-v_2(j_1)-v_2(j_2)\notag\\
&\ge 2^{m-1}+(l_1-m)2^{l_1}+2(m-l_1-1)+l_1-2-\max\{v_2(j_1), v_2(j_2)\}\notag\\
&= 2^{m-1}+(l_1-m)(2^{l_1}-1)-1\notag\\
&\ge 2^{m-1}-(2^{m-1}-1)-1\notag\\
&=0\notag
\end{align}
as desired.
Since $2\le j_1\le 2^{m-2}$ and $2\le j_2\le 2^{m-2}$, in oder
to prove that (\ref{3.49}) holds, it suffices to show that $j_1\ne 2^{m-2}$ and $j_2\ne 2^{m-2}$.
This will be done in what follows.

If $2\le k\le 2^{m-2}$, then we must have $l_1=m-1$. Otherwise, one has $2\le l_1\le m-2$,
from which and (\ref{3.43}) follows that
$$2^m-k\le 2^{m-2}-2+2^{m-1}-2=2^{m-1}+2^{m-2}-4\le 2^m-k-4.$$
This is a contradiction. So $l_1=m-1$. By (\ref{3.43}), one then gets that $j_1+j_2=k$.
It infers that $j_1\ne 2^{m-2}$ and $j_2\ne 2^{m-2}$ since $j_1\ge 2$ and $j_2\ge 2$.
Hence (\ref{3.49}) follows.

If $2+2^{m-2}\le k\le 2^{m-1}$, then by the definition of $B_1$, one has
$i\ne 2^{m-2}$ and $2^m-k-i=2^{m-1}-j_2\ne 2^{m-2}$. Thus $j_2\ne 2^{m-2}$.
Since $2\le j_1\le 2^{l_1-1}\le 2^{m-2}$, one gets that $l_1=m-1$ when $j_2=2^{m-2}$,
and so $i=2^{l_1}-j_1=2^{m-1}-2^{m-2}=2^{m-2}$. But $i\ne 2^{m-2}$. Hence $j_1\ne 2^{m-2}$.
We then conclude that (\ref{3.49}) is true.

Therefore, $D_{l_1,l_2}\ge 0$ is proved in this case.

{\sc Case 2.2.} $l_2=m$. By (\ref{3.44}) one obtains that
\begin{align}
v_2(k)-v_2(j_1)-v_2(j_2)\ge -\max\{v_2(j_1), v_2(j_2)\}\ge -(m-1) \label{3.50}
\end{align}
since $2\le j_1\le 2^{l_1-1}\le 2^{m-2}$ and $2\le j_2\le 2^{l_2-1}=2^{m-1}$.

If $l_1=m-1$, then it follows from (\ref{3.43}) that $k=j_1+j_2-2^{m-1}\le 2^{m-2}$.
This infers that $2^m-k-i=2^{m}-j_2\ne 2^{m-1}$, and so $j_2\ne 2^{m-1}$.
Hence $2\le j_2\le 2^{m-1}-2$.
Together with (\ref{3.48}) and (\ref{3.50}), one then arrives at
\begin{align*}
D_{l_1,l_2}&=D_{m-1,m}=2^{m-1}-j_2+m-2+v_2(k)-v_2(j_1)-v_2(j_2)\\
&\ge 2^{m-1}-(2^{m-1}-2)+m-2-(m-1)=1.
\end{align*}

If $2\le l_1\le m-2$, then by (\ref{3.48}), (\ref{3.50}), $j_1\ge 2$ and $j_2\le 2^{m-1}$,
we derive that
\begin{align*}
D_{l_1,l_2}&=D_{l_1,m}\\
&=2^m+(l_1-m)2^{l_1}+j_1(m-l_1-1)-j_2+l_1-1+v_2(k)-v_2(j_1)-v_2(j_2)\\
&\ge 2^m+(l_1-m)2^{l_1}+2(m-l_1-1)-2^{m-1}+l_1-1-(m-1)\\
&=2^{m-1}+(l_1-m)(2^{l_1}-1)-2\\
&\ge 2^{m-1}-2(2^{m-2}-1)-2\\
&=0
\end{align*}
as expected. Hence $D_{l_1,l_2}\ge 0$ is proved. 
Thus (\ref{3.40}) holds for any $i\in B_1$.

This completes the proof of claim (II).

{\it Proof of claim (III).}
Let $i\in C$. If $i > 2^n$ or $2^m-k-i > 2^n$,
then $g_k(i)=s(2^n,i)s_{2^n}(2^n,2^m-k-i)=0$.
Since $k$ is even and $C$ contains only odd integers,
we can rewrite $S_3$ as:
$$
S_3=\sum_{i\in \widetilde{C}}g_k(i),
$$
where
$\widetilde{C}:=C\cap [2^m-k-2^n,2^n]=C\cap [2^m-k-2^n+1,2^n-1].$

Since $2^{m-1}-k+1\in \widetilde{C}$, $\widetilde{C}$ is nonempty.
Now let $i\in \widetilde{C}$. We will show that the following inequality holds:
\begin{align}
v_2(g_k(i))\ge W_{m,k}+1.\label{3.51}
\end{align}
It then follows that
$$
v_2(S_3)=v_2\Big(\sum_{i\in \widetilde{C}}g_k(i)\Big)
\ge \min_{i\in \widetilde{C}}\{v_2(g_k(i))\}\geq W_{m,k}+1
$$
as claim (III) asserted. So to finish the proof of claim (III),
it is enough to prove the truth of (\ref{3.51}). To do so,
we first let $i=1$. Since $i\in \tilde C$ is odd, we have $2^m-k-2^n+1\le 1$,
from which one can deduce that $1\le 2^m-k-1\le 2^n-1$. Thus by the induction assumption
and Lemma \ref{lem4}, we get that
\begin{align}
v_2(s_{2^n}(2^n,2^m-k-i))&=v_2(s_{2^n}(2^n,2^m-k-1))=v_2(s(2^n,2^m-k-1))\notag\\
&=2^n-2^m-(n-m)(2^m-k)+m+n-2-v_2(k).\label{3.52}
\end{align}
But
\begin{align}
v_2(s(2^n,i))=v_2(s(2^n,1))=v_2((2^n-1)!)=2^n-n-1.\label{3.53}
\end{align}
So together with the hypothesis $k\le 2^{m-1}$ and $m\ge 3$, (\ref{3.52}) and (\ref{3.53}) tells us that
\begin{align*}
v_2(g_k(i))-W_{m,k}&=v_2(s(2^n,i))+v_2(s_{2^n}(2^n,2^m-k-i))-W_{m,k}\\
&=2^m-k-2\ge 2^{m-1}-2\ge 2
\end{align*}
as (\ref{3.51}) desired. So (\ref{3.51}) is true when $i=1$.

Now let $3\leq i\leq 2^n-1$. We claim that
\begin{align}\label{3.54}
v_2(s(2^n,i+1))\geq v_2(s(2^n,i-1))-2n+4.
\end{align}
Evidently, for $i\in \widetilde{C}$ with $i\ge 3$, one has $2^m-k-i\in \widetilde{C}$
that infers that $2\le 2^m-k-(i-1)\le 2^n$.
Then it follows from the inductive hypothesis,
Lemma \ref{lem4} and claim (\ref{3.54}) that
\begin{align}
v_2(g_k(i))&=v_2(s(2^n,i))+v_2(s_{2^n}(2^n,2^m-k-i))\notag\\
&=v_2(s(2^n,i))+v_2(s(2^n,2^m-k-i))\notag\\
&=v_2(s(2^n,i+1))+n-1+v_2(s(2^n,2^{m}-k-i+1))+n-1\notag\\
&\ge v_2(s(2^n,i-1))+v_2(s(2^n,2^{m}-k-(i-1)))+2\notag\\
&=v_2(s(2^n,i-1))+v_2(s_{2^n}(2^n,2^{m}-k-(i-1)))+2\notag\\
&=v_2(g_k(i-1))+2. \label{3.55}
\end{align}
However, $i-1$ is even and $\max\{2, 2^m-k-2^n\}\le i-1\le \min\{2^m-k-2,2^n-2\}$
since $i\in \widetilde{C}$. Hence $i-1\in A\cup B_1\cup B_2\cup B_3$.
By the proof of claims (I) and (II), we know that
$v_2(g_k(j))\ge W_{m,k}-1$ if $j\in A$ and
$v_2(g_k(j))\ge W_{m,k}$ if $j\in B_1\cup B_2\cup B_3$. Thus
$$v_2(g_k(i-1))\ge W_{m,k}-1.$$
Together with (\ref{3.55}) one then arrives at the expected inequality (\ref{3.51}).

So to finish the proof of (\ref{3.51}), it remains to show that claim (\ref{3.54}) is true.
This will be done in what follows.
First, if $i=2^n-1$, then $v_2(s(2^n,i+1))=v_2(s(2^n,2^n))=0$.
By the inductive hypothesis one has $v_2(s(2^n,i-1))=v_2(s(2^n,2^n-2))=n-2$.
Since $n\ge 2$, one arrives at
$$
v_2(s(2^n,i+1))-v_2(s(2^n,i-1))=-n+2=n-2-2n+4\ge -2n+4
$$
as (\ref{3.54}) claimed.

Now suppose that $3\le i\le 2^n-3$. Then $4\le i+1\le 2^n-2$. Since $i+1$ is even,
one may let $i+1=2^l-j$ with $(l,j)\in T_n$ and $j$ being even.
By the induction assumption one obtains that
\begin{align}\label{3.56}
v_2(s(2^n,i+1))= v_2(s(2^n,2^l-j))=2^n-2^l-(n-l)(2^l-j)+l-1-v_2(j).
\end{align}
Also, one has $i-1=2^l-(j+2)$.
If $j+2\le 2^{l-1}$, then $2\le j\le 2^{l-1}-2$ with $l\ge 3$. So $v_2(j)\le l-2$.
Again, by the induction assumption, one gets that
\begin{align}\label{3.57}
v_2(s(2^n,i-1))&= v_2(s(2^n,2^l-(j+2)))\notag\\
&=2^n-2^l-(n-l)(2^l-(j+2))+l-1-v_2(j+2).
\end{align}
Then it follows from (\ref{3.56}), (\ref{3.57}), $1\le v_2(j)\le l-2$ and $l\ge 3$ that
\begin{align}
v_2&(s(2^n,i+1))-v_2(s(2^n,i-1))=-2(n-l)+v_2(j+2)-v_2(j)\notag\\
&\ge-2(n-l)-(l-2)+1=-2n+l+3\ge -2n+6. \notag
\end{align}
On the other hand, if $j+2\ge 2^{l-1}+2$,
then one can deduce that $j=2^{l-1}$
since $(l,j)\in T_n$ together with the fact that $j$ being even implies that $ j\le 2^{l-1}$.
So $i+1=2^{l-1}$ and $i-1=2^{l-1}-2$. But $i+1\ge 4$, so $l\ge 3$.
Thus by the inductive hypothesis, one has
\begin{align}\label{3.58}
v_2(s(2^n,i+1))=v_2(s(2^n,2^l-2^{l-1}))=2^n-2^{l}-(n-l)2^{l-1}
\end{align}
and
\begin{align}\label{3.59}
v_2(s(2^n,i-1))= v_2(s(2^n,2^{l-1}-2))=2^n-2^{l-1}-(n-l+1)(2^{l-1}-2)+l-3.
\end{align}
Since $l\ge 3$, putting (\ref{3.58}) and (\ref{3.59}) together gives us that
\begin{align*}
v_2(s(2^n,i+1))-v_2(s(2^n,i-1))=-2n+l+1\ge -2n+4
\end{align*}
as claim (\ref{3.54}) asserted. This finishes the proof of claim (\ref{3.54}).
Hence inequality (\ref{3.51}) is proved. Thus the proof of claim (III) is complete.

This concludes the proof of Theorem \ref{thm1}.
\hfill$\Box$

\section{Proof of Theorem \ref{thm2}}

We show the truth of Theorem \ref{thm2} in this section.\\

\noindent{\it Proof of Theorem \ref{thm2}.}
If $n=1$, then $k\in \{1,2\}$. But $s(3,2)=3$ and
$s(2,1)=s(3,3)=s(2,2)=1$. So Theorem 1.4 is true when $n=1$.
In the following, we let $n\ge 2$.

By the recurrence relation for the Stirling numbers of the first kind, we know that
\begin{align}\label{4.1}
s(2^n+1,k+1)=2^n s(2^n,k+1)+s(2^n,k).
\end{align}
Thus for any integer $k$ with $1\le k\le 2^n$, if we can show that
\begin{align}\label{4.2}
v_2(s(2^n,k+1))> v_2(s(2^n,k))-n,
\end{align}
then using the isosceles triangle principle together with (\ref{4.1}), one can arrive at
$$
v_2(s(2^n+1,k+1))=v_2(2^ns(2^n,k+1)+s(2^n,k))=v_2(s(2^n,k))
$$
as (\ref{1.4}) desired. Therefore, to finish the proof of Theorem \ref{thm2},
it suffices to show the truth of (\ref{4.2}).
Since $v_2(s(2^n,2^n-1))=n-1$, $v_2(s(2^n,2^n-2))=n-2$,
$s(2^n,2^n+1)=0$ and $s(2^n,2^n)=1$,
(\ref{4.2}) is obviously true when $k\in \{2^n-2,2^n-1,2^n\}$.
So in what follows, we let $1\le k\le 2^n-3$.

If $k$ is odd, then by Theorem \ref{thm1} and Lemma \ref{lem5}, one can deduce that
$$v_2(s(2^n,k))=v_2(s(2^n,k+1))+n-1,$$
which implies that (\ref{4.2}) holds when $k$ is odd.

If $k$ is even, then $k+1$ is odd, and so
$$v_2(s(2^n,k+1))=v_2(s(2^n,k+2))+n-1.$$
But with $i$ replaced by $k+1$ in claim (\ref{3.54}) gives us that
$$v_2(s(2^n,k+2))\ge v_2(s(2^n,k))-2n+4.$$
Hence $v_2(s(2^n,k+1))\ge v_2(s(2^n,k))-n+3.$
Thus (\ref{4.2}) is true when $k$ is even.

This finishes the proof of Theorem \ref{thm2}.
\hfill$\Box$

\section{Proofs of Corollaries \ref{cor2} and \ref{cor3}}

This section is dedicated to the proofs of Corollaries \ref{cor2} and \ref{cor3}.
We begin with the proof of Corollary \ref{cor2}. \\

\noindent{\it Proof of Corollary \ref{cor2}.}
If $n=1$, then $k=1$ or $k=2$. Since $s(2,1)=s(2,2)=1$,
(\ref{1.7}) is true when $n=1$.
Now let $n\ge 2$. Then
$$v_2(s(2^n,1))-v_2(s(2^n,2^n))=2^n-n-1\ge 1$$
and
$$v_2(s(2^n,1))-v_2(s(2^n,2^n-1))=2^n-n-1-(n-1)=2^n-2n\ge 0.$$
So (\ref{1.7}) is proved when $k=2^n$ and $k=2^n-1$.

If $1\le k\le 2^n-2$, then let $k=2^l-j$ with $(l,j)\in T_n$.
By (\ref{1.3}) one obtains that
\begin{align}
&v_2(s(2^n,1))-v_2(s(2^n,k))=v_2(s(2^n,1))-v_2(s(2^n,2^l-j))\notag\\
&=2^l+(n-l)\Big(2^l-2\Big\lfloor\frac{j}{2}\Big\rfloor\Big)
-n-l+1+v_2\Big(\Big\lfloor\frac{j}{2}\Big\rfloor\Big)-(n-1)\epsilon_j:=D_j,\label{3.60}
\end{align}
where $\epsilon_j=0$ if $j$ is even, and $\epsilon_j=1$ if $j$ is odd.
Consider the following two cases.

For the case that $j$ is even, one has $2\le j\le 2^{l-1}$. Since $2\le l\le n$, by (\ref{3.60}) we have
\begin{align*}
D_j&=2^l+(n-l)(2^l-j)-n-l+v_2(j)\\
&=(n-l)(2^l-j-1)+2^l-2l+v_2(j)\\
&\ge (n-l)(2^{l-1}-1)+2^l-2l+1\\
&\ge 1,
\end{align*}
the last inequality is due to the reason that $2^l-2l$ is increasing when $l\ge 2$.

For the case that $j$ is odd, one gets that $3\le j\le 2^{l-1}+1$ and $\lfloor\frac{j}{2}\rfloor=\frac{j-1}{2}$.
Again, by $2\le l\le n$ and (\ref{3.60}) we derive that
\begin{align*}
D_j&=2^l+(n-l)(2^l-j+1)-n-l+v_2(j-1)-(n-1)\\
&=(n-l)(2^l-j-1)+2^l-3l+v_2(j-1)+1\\
&\ge (n-l)(2^{l-1}-2)+2^l-3l+2\\
&\ge 2^l-3l+2\\
&\ge 0.
\end{align*}
Now one can conclude that $D_j\ge 0$. Hence $v_2(s(2^n,k))\le v_2(s(2^n,1))$.
Therefore (\ref{1.7}) is true when $1\le k\le 2^n-2$.

This completes the proof of Corollary \ref{cor2}.
\hfill$\Box$\\

Finally, we present the proof of Corollary \ref{cor3} as the conclusion of this paper.\\

\noindent{\it Proof of Corollary \ref{cor3}.}
Let $n$ and $k$ be positive integers such that $k\le 2^n$.
By (\ref{1.1}) we know that
\begin{align}
H(2^n,k)=\frac{s(2^n+1,k+1)}{(2^n)!}.\label{5.2}
\end{align}
Then by (\ref{5.2}) and Theorem \ref{thm2} together with Corollary \ref{cor2}, one deduces that
\begin{align*}
v_2(H(2^n,k))&=v_2(s(2^n+1,k+1))-v_2((2^n)!)\\
&=v_2(s(2^n,k))-v_2((2^n)!)\\
&\le v_2(s(2^n,1))-v_2((2^n)!)\\
&=v_2(s(2^n+1,2))-v_2((2^n)!)\\
&=v_2(H(2^n,1))\\
&=-n
\end{align*}
as (\ref{1.8}) expected, the last step is because $v_2(\frac{1}{k})\ge -n+1$ for all integers $k$
with $1\le k\le 2^n-1$ and $v_2(\frac{1}{2^n})=-n$.

This concludes the proof of Corollary \ref{cor3}.
\hfill$\Box$
\\

\bibliographystyle{amsplain}

\end{document}